\newcommand{\dd}{\,d}       
\newcommand{\cH}{\mathcal{H}}
\newcommand{\R}{\mathbb{R}}
\newcommand{\B}{\mathbb{B}}
\newcommand{\cN}{\mathcal{N}}
\newcommand{\BMO}{\text{{\rm BMO}}}
\newcommand{\ale}{\lesssim}
\newcommand{\eps}{\varepsilon}
\newcommand{\vp}{\varphi}
\newcommand{\pl}{\partial}
\newcommand{\dv}{\operatorname{div}}
\newcommand{\ps}[2]{\left\langle #1, #2 \right\rangle}      
\newcommand{\wOmega}{\widetilde{\Omega}}
\newcommand{\tu}{\tilde{u}}
\newcommand{\SO}{\text{{\rm SO}}}
\newcommand{\Id}{\operatorname{Id}}
\def\barroman#1{\sbox0{#1}\dimen0=\dimexpr\wd0+1pt\relax
  \makebox[\dimen0]{\rlap{\vrule width\dimen0 height 0.06ex depth 0.06ex}%
    \rlap{\vrule width\dimen0 height\dimexpr\ht0+0.03ex\relax 
            depth\dimexpr-\ht0+0.09ex\relax}%
    \kern.5pt#1\kern.5pt}}
\def\Xint#1{\mathchoice
   {\XXint\displaystyle\textstyle{#1}}%
   {\XXint\textstyle\scriptstyle{#1}}%
   {\XXint\scriptstyle\scriptscriptstyle{#1}}%
   {\XXint\scriptscriptstyle\scriptscriptstyle{#1}}%
   \!\int}
\def\XXint#1#2#3{{\setbox0=\hbox{$#1{#2#3}{\int}$}
     \vcenter{\hbox{$#2#3$}}\kern-.5\wd0}}
\def\dashint{\Xint-}
\def\mvint_#1{\mathchoice
          {\mathop{\vrule width 6pt height 3 pt depth -2.5pt
            \kern -9.2pt \intop}\nolimits_{\kern -3pt #1}}%
             {\mathop{\vrule width 5pt height 3 pt depth -2.6pt
                        \kern -6pt \intop}\nolimits_{#1}}%
              {\mathop{\vrule width 5pt height 3 pt depth -2.6pt
                   \kern -6pt \intop}\nolimits_{#1}}%
              {\mathop{\vrule width 5pt height 3 pt depth -2.6pt
                      \kern -6pt \intop}\nolimits_{#1}}}
\newcommand{\omittedtext}[1]{\textcolor{magenta!50!black}{tu pominiety fragment}}
\numberwithin{equation}{section}
\newtheorem{theorem}{Theorem}[section]
\newtheorem{corollary}[theorem]{Corollary}
\newtheorem{lemma}[theorem]{Lemma}
\theoremstyle{definition}
\newtheorem{definition}[theorem]{Definition}
\newtheorem{remark}[theorem]{Remark}
\newtheorem{question}[theorem]{Question}
\title%
[$H$-systems and $n$-harmonic maps with $n/2$ square integrable derivatives]%
{%
Regularity for solutions of $H$-systems and $n$-harmonic maps\\
with $n/2$ square integrable derivatives
}
\date{\today}
\begin{document}

\author{Micha{\l} Mi\'{s}kiewicz}
\address[M.~Mi\'{s}kiewicz]{
  \newline \indent
  Institute of~Mathematics, 
  Polish Academy of~Sciences 
  \newline \indent
  Śniadeckich~8, 
  00-656 Warsaw, Poland 
  \newline \indent
  Institute of~Mathematics,
  University of~Warsaw
  \newline \indent
  Banacha~2,
  02-097 Warsaw, Poland}
\email{m.miskiewicz@mimuw.edu.pl}

\author{Bogdan Petraszczuk}
\address[B.~Petraszczuk]{
  \newline \indent
  Warsaw Doctoral School of Mathematics and Informatics,
\newline \indent
  Banacha~2,
  02-097 Warsaw, Poland}
\email{bogdan.petraszczuk@gmail.com}

\author{Pawe{\l} Strzelecki}
\address[P.~Strzelecki]{
  \newline \indent
  Institute of~Mathematics,
  University of~Warsaw
  \newline \indent
  Banacha~2,
  02-097 Warsaw, Poland}
\email{p.strzelecki@mimuw.edu.pl}
\maketitle


\begin{abstract}
We study the regularity of weak solutions for two elliptic systems involving the $n$-Laplacian and a~critical nonlinearity in the right hand side: $H$-systems and $n$-har\-mo\-nic maps into compact Riemannian manifolds. Under the assumptions that the solutions belong to $W^{n/2,2}$ in an even dimension $n$, we prove their continuty.

The tools used in the proof involve Hardy spaces and BMO, and the Rivi\`{e}re--Uhlenbeck decomposition (with estimates in Morrey spaces). A prominent role is played by the Coifman--Rochberg--Weiss commutator theorem. 
\end{abstract}

\section{Introduction}

\frenchspacing

In this paper, we present two new results on the regularity of weak solutions for  two conformally invariant elliptic systems, involving the $n$-Laplacian and a~critical nonlinearity in the right hand side: $H$-systems (equations of hypersurfaces of prescribed mean curvature) and $n$-har\-mo\-nic maps into compact Riemannian manifolds. In fact, we prove that for both these problems all weak solutions $u$ of class $W^{n/2,2}$ are regular in even dimensions $n\ge 2$, cf. Theorems~\ref{Hsystems-regularity}--\ref{nharmonic-regularity} below.

In dimension $n$, by Sobolev imbedding, $W^{n/2}\subset W^{1,n}$. Thus, the extra assumption $u\in W^{n/2,2}$ is stronger than  $u\in W^{1,n}$; on the other hand, it does not trivialize the problem, as $W^{n/2,2}$ embeds neither into $L^\infty$ nor into $C^0$. Despite  various trials, up to now the counterparts of Theorems~\ref{Hsystems-regularity} and~\ref{nharmonic-regularity} under the natural assumption $u\in W^{1,n}$ are known to hold in only a~few special cases. To be specific: for the $H$-systems, one needs $H$ to be constant or to decay at infinity sufficiently fast, cf. \cite{Mou-Yang-1996b}, \cite{Wang-1999}; for the $n$-harmonic maps, one needs to assume that the target manifold $\cN$ is symmetric, e.g. a round sphere or a compact homogeneous space, see \cite{Fuchs-1993}, \cite{Strzelecki-1994},  \cite{Takeuchi-1994}, \cite{ToroW-1995}. In full generality, the problem is open. The extra assumption $u\in W^{n/2,2}$ is tailored to fill in a gap in our proofs which break down if one assumes only that $u\in W^{1,n}$; nevertheless, we do believe that they do shed new light on the general problem.

\smallskip

Before discussing what is the role of the assumption $u\in W^{n/2,2}$ in the proofs, let us state both theorems. For $n=2$, both results immediately translate to the well known results of F. H\'{e}lein's \cite{Helein-1991} and F. Bethuel's \cite{Bethuel-1992}, ascertaining the regularity of all weakly harmonic maps from planar domains into Riemannian manifolds  and of all solutions of the prescribed mean curvature equation.

\begin{theorem}
\label{Hsystems-regularity}
Let $n$ be even, let $H:\R^{n+1}\rightarrow \R$ be a bounded Lipschitz function and let $u\in W^{n/2,2}(\B^{n},\R^{n+1})$ be a weak solution of the $H$-system
\begin{equation}\label{eq1}
-{\rm{div}}(|\nabla u|^{n-2}\nabla u) = H(u)\, Ju
\end{equation}
or equivalently
\begin{equation}\label{eq2}
\int_{\B^{n}}|\nabla u |^{n-2}\nabla u \cdot \nabla \psi\, dx = \int_{\B^{n}}H(u)\, Ju \cdot \psi\, dx \qquad\mbox{for each $\psi\in C_{c}^{\infty}\big(\B^{n},\R^{n+1}\big)$,}
\end{equation}
where $Ju = \frac{\partial u}{\partial x_{1}}\times...\times\frac{\partial u}{\partial x_{n}}.$ Then $u$ is locally H\"older continuous.
\end{theorem}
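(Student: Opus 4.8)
My strategy is to follow the Rivière–Uhlenbeck gauge-reduction philosophy adapted to the $n$-Laplacian, as in Rivière–Struwe and Strzelecki's earlier work on $n$-harmonic maps, but upgraded by the extra regularity $u \in W^{n/2,2}$ which — crucially — should force $\nabla u$ to lie in the Morrey space $L^{n,\lambda}_{\mathrm{loc}}$ for every $\lambda < n$ (via the fractional Sobolev/Morrey embedding in even dimension $n$), or at worst in a tiny-norm Morrey space after localization. The continuity then follows from a Morrey-space decay estimate obtained by a hole-filling/iteration argument once the right-hand side is written in a divergence-compensated form.

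The key steps, in order, are:
\begin{enumerate}
\item \textbf{Antisymmetric structure.} Rewrite $H(u)\,Ju$ using the algebraic identity for the cross product so that the $i$-th component becomes a sum of terms of the form $\Omega^i_j \cdot |\nabla u|^{n-2}\nabla u^j$ where $\Omega = (\Omega^i_j)$ is a matrix of $1$-forms that is \emph{antisymmetric} ($\Omega^i_j = -\Omega^j_i$) with coefficients controlled by $\|H\|_\infty |\nabla u|^{n-1}$; this is the $H$-system analogue of the Rivière potential, and it exploits the Jacobian/determinant structure of $Ju$ exactly as in the $n=2$ case of Bethuel.
\item \textbf{Coulomb/Uhlenbeck gauge.} On small balls $B_r$, use the smallness of $\|\nabla u\|_{L^n(B_r)}$ (absolute continuity of the integral) together with the Morrey control to construct a gauge $P \in W^{1,n}(B_r, \mathrm{SO}(N))$ and an antisymmetric $\xi$ with $d^*\xi$-type structure so that the transformed connection $P^{-1} dP + P^{-1}\Omega P$ is divergence-free up to lower order; this is the Rivière–Uhlenbeck decomposition \emph{with Morrey estimates} that the abstract says is available, so I would invoke it as a black box.
\item \textbf{Conservation law and Hardy/BMO duality.} In the good gauge, the system takes the conservative form $\operatorname{div}(P\,|\nabla u|^{n-2}\nabla u) = \nabla^\perp \xi \cdot P\,|\nabla u|^{n-2}\nabla u$ (schematically), whose right-hand side is a sum of Jacobian-type quadratic expressions; by the div-curl/Coifman–Lions–Meyer–Semmes theory these lie in the Hardy space $\mathcal H^1$, and one estimates the relevant potentials by pairing against $\mathrm{BMO}$ — this is where the Coifman–Rochberg–Weiss commutator theorem enters, to handle the commutator $[P, \text{(singular integral)}]$ arising because $P$ is only in $W^{1,n} \subset \mathrm{VMO}$, not constant.
\item \textbf{Morrey decay and iteration.} Combining the $\mathcal H^1$–$\mathrm{BMO}$ estimate with the Morrey bound on $\nabla u$ yields, on balls, an inequality $\int_{B_{\theta r}} |\nabla u|^n \le (C\theta^\alpha + \text{small}) \int_{B_r} |\nabla u|^n + \text{l.o.t.}$; iterating this Dirichlet-growth/Morrey lemma gives $\int_{B_r} |\nabla u|^n = O(r^\delta)$ for some $\delta > 0$, i.e. $\nabla u \in L^{n,n-\delta}_{\mathrm{loc}}$, and then Morrey's embedding $W^{1,(n,\lambda)} \hookrightarrow C^{0,\gamma}$ gives local Hölder continuity of $u$.
\end{enumerate}

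\textbf{Main obstacle.} The hardest point is Step 3: controlling the failure of $P$ to be constant. For the $2$-Laplacian this is Rivière's theorem and the commutator estimate is classical; for the $n$-Laplacian with $n > 2$ the nonlinearity $|\nabla u|^{n-2}\nabla u$ is not linear, so one cannot directly apply Riesz-transform algebra — one must work with the $A$-harmonic (degenerate elliptic) comparison maps and transfer the gauge estimates through the nonlinear operator, and it is precisely here that the extra hypothesis $u \in W^{n/2,2}$ is spent: it is what promotes $\nabla u$ from $L^n$ (where the borderline $\mathcal H^1$–$\mathrm{BMO}$ pairing barely fails to give decay) to a subcritical Morrey space where the iteration closes. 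I would expect to phrase this as a separate lemma: "$u \in W^{n/2,2} \Rightarrow \nabla u \in L^{n,\lambda}_{\mathrm{loc}}$ for all $\lambda < n$", prove it by a direct Riesz-potential estimate (Adams' embedding $I_\sigma: L^p \to L^{q,\lambda}$), and feed it into the gauge machinery.
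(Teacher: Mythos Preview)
Your central lemma --- ``$u \in W^{n/2,2} \Rightarrow \nabla u \in L^{n,\lambda}_{\mathrm{loc}}$ for all $\lambda < n$'' --- is false, and with it the whole scheme collapses. If $\nabla u$ belonged to the Morrey space $L^{n,\lambda}$ for some $\lambda>0$, then by Morrey--Campanato theory $u$ would lie in $C^{0,\lambda/n}$; your lemma would therefore give the embedding $W^{n/2,2}(\B^n)\hookrightarrow C^0$, which the introduction explicitly notes does \emph{not} hold (this non-embedding is precisely why the problem is nontrivial). The Adams--Riesz potential argument you sketch only recovers $\nabla u\in L^n$ (or at best the Lorentz space $L^{n,2}$, cf.\ the paper's Question on Lorentz assumptions), never a Morrey space with positive second index. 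So the mechanism you propose for closing the iteration --- feeding an a~priori Morrey bound into the gauge estimates --- is unavailable.

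The paper's route is quite different from yours on two further counts. First, for $H$-systems it does \emph{not} invoke the Rivi\`ere--Uhlenbeck gauge at all (that is reserved for $n$-harmonic maps, Theorem~\ref{nharmonic-regularity}); instead it exploits the Jacobian structure of $Ju$ directly, splitting $H(u)\,Ju$ into a ``constant-$H$'' part handled by $\mathcal H^1$--$\BMO$ duality and a remainder measured by the Coifman--Rochberg--Weiss commutator applied to the Hodge decomposition of $b\,d\tu^1$ with $b=H(u)-H(u)_{B}$. Second, the role of $W^{n/2,2}$ is not an embedding into Morrey spaces but the sharp Gagliardo--Nirenberg inequality
\[
\|\nabla \tu\|_{L^n}^{n}\ \lesssim\ \|\nabla^{n/2}\tu\|_{L^2}^{2}\,\|\tu\|_{\BMO}^{\,n-2},
\]
which converts the unavoidable $\|\nabla u\|_{L^n}^n$ appearing in the bad term into a product of a \emph{small} factor $\|\nabla^{n/2}u\|_{L^2(B_{3r})}^2$ (small by absolute continuity) and powers of the Morrey norm $\|\nabla u\|_{L^{n-\eps,\eps}}$ being iterated. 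The whole argument runs \emph{below} the natural exponent: the test map comes from the Hodge decomposition of $|\nabla\tu|^{-\eps}\nabla\tu$ with $\eps>0$ small, so that the gradient part lies in $W^{1,q}$ with $q=(n-\eps)/(1-\eps)>n$ and is bounded. Your Step~4 aims at decay of $\int_{B_r}|\nabla u|^n$, but the paper obtains decay of $\|\nabla u\|_{L^{n-\eps,\eps}(B_r)}$ instead, and the choice of $\eps$ is what makes the hole-filling close.
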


Theorem~\ref{Hsystems-regularity} was announced without proof in a survey paper of A. Schikorra and the last named author: in \cite[Prop.~3.6]{Schikorra-Strzelecki-2017}, just a brief sketch of the construction of the test map and of the estimate of the right hand side was given for $n=4$. Since the whole proof is subtle and quite involved, we present it here with all necessary details. Combining the tools used in the proof of Theorem~\ref{Hsystems-regularity} with ideas of T. Rivi\`{e}re's paper \cite{Riviere-2007} and with a stability estimate of T.~Iwaniec and C.~Sbordone \cite{Iwaniec-Sbordone-1994}, we obtain the second result, Theorem~\ref{nharmonic-regularity}.


\begin{theorem}
\label{nharmonic-regularity}
Let $n$ be even, let $\cN \subseteq \R^d$ be a smooth closed submanifold, and let $u\in W^{n/2,2}(\B^{n},\cN)$ be a weakly $n$-harmonic map, i.e. a weak solution of the system
\begin{equation}\label{eq.n-harmonic}
-{\rm{div}}(|\nabla u|^{n-2}\nabla u) = |\nabla u|^{n-2} A_u(\nabla u, \nabla u),
\end{equation}
where $A_p$ is the second fundamental form of $\cN \subseteq \R^d$ at $p \in \cN$. Then $u$ is locally H\"older continuous.
\end{theorem}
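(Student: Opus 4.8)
The plan is to transplant T.~Rivière's scheme for weakly harmonic maps from the plane to the degenerate $n$-Laplacian, using the estimates behind Theorem~\ref{Hsystems-regularity} as a building block, and to run a Morrey-space iteration that forces a geometric decay of the $n$-energy on small balls. Everything is local, so after a translation and a dilation, and by absolute continuity of $\int|\nabla u|^n$ and of the $W^{n/2,2}$-seminorm, we may assume both quantities are as small as we wish on $\B^n$. First I would rewrite \eqref{eq.n-harmonic} in Rivière's antisymmetric form: choosing locally a smooth orthonormal frame $(\nu_\alpha)$ of the normal bundle of $\cN$, one gets
\[
-\dv\big(|\nabla u|^{n-2}\nabla u\big)=|\nabla u|^{n-2}\,\Omega\cdot\nabla u,\qquad \Omega=(\Omega^i_j),\quad \Omega^i_j=-\Omega^j_i,
\]
with $\Omega\in L^n(\B^n,\so(d)\otimes\Lambda^1\R^n)$ and $|\Omega|\ale|\nabla u|$; the extra hypothesis $u\in W^{n/2,2}$ is carried along as a Morrey bound on $\nabla u$, hence on $\Omega$.

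Next I would apply the Rivière--Uhlenbeck decomposition, with estimates in the relevant Morrey spaces: for $\|\Omega\|$ small one obtains $P\in W^{1,n}(\B^n,\SO(d))$ and an antisymmetric potential $\xi$ such that $P^{-1}dP+P^{-1}\Omega P$ is, up to a curl-type term built from $\xi$, divergence free, with all norms controlled by $\|\Omega\|$. Conjugating the equation by $P$ turns the left-hand side into the divergence of $w:=|\nabla u|^{n-2}P^{-1}\nabla u$ and, thanks to the antisymmetry of $\Omega$ and $\xi$, reorganizes the right-hand side into a sum of div--curl (generalized Jacobian) terms and of commutators between $\BMO$ functions -- the frame coefficients, $P$, and $\xi$ all lie in $W^{1,n}\subset\BMO$ -- and Riesz transforms.

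Then I would show that $\dv w\in\cH^1_{\mathrm{loc}}$, with a Morrey refinement. The generalized Jacobian terms lie in the Hardy space by the Coifman--Lions--Meyer--Semmes theorem applied to (minors of) $Ju=\tfrac{\partial u}{\partial x_1}\times\cdots\times\tfrac{\partial u}{\partial x_n}$ -- exactly the mechanism that handles the right-hand side $H(u)\,Ju$ in Theorem~\ref{Hsystems-regularity} -- while the commutator terms are controlled via the Coifman--Rochberg--Weiss commutator theorem; it is precisely here that the hypothesis $u\in W^{1,n}$ is borderline-insufficient, and the half-derivative gain from $u\in W^{n/2,2}$ (equivalently, Morrey control on $\nabla u$) is what closes the bilinear estimate. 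Combining $\dv w\in\cH^1$ with the Iwaniec--Sbordone stability estimate for the $n$-harmonic operator and a Riesz-potential bound adapted to Hardy and Morrey spaces yields a strictly better Morrey exponent for $|\nabla u|^n$ on a smaller concentric ball. Iterating this gain (a Morrey--Campanato decay argument) gives $\int_{B_r}|\nabla u|^n\ale r^{\alpha}$ for some $\alpha>0$, and Morrey's lemma then produces $u\in C^{0,\beta}_{\mathrm{loc}}$.

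\textbf{Main obstacle.} The crux is the Hardy-space step: extracting from the $P$-conjugated system for the \emph{degenerate} $n$-Laplacian enough div--curl/Jacobian structure to land the right-hand side in $\cH^1$ rather than merely in $L^1$. Unlike the planar case, where a single Jacobian appears, here one must disentangle products of $n$ gradient factors term by term against the factor $|\nabla u|^{n-2}$, and the resulting estimates are genuinely at the borderline -- this is the gap that $W^{n/2,2}$ is tailored to fill. A secondary difficulty is making the Uhlenbeck gauge and every subsequent estimate quantitative in Morrey norms, so that the final iteration yields an honest geometric decay of the $n$-energy rather than just a qualitative improvement.
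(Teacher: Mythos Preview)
Your outline correctly identifies the antisymmetric rewriting and the Rivi\`{e}re--Uhlenbeck gauge step, but the core of the argument is misconceived. You aim to place $\dv w=|\nabla u|^{n-2}\wOmega\cdot P^{-1}\nabla u$ in $\cH^1$ by extracting ``enough div--curl/Jacobian structure''. For $n>2$ this is exactly the step that is \emph{not} known to work: the scalar weight $|\nabla u|^{n-2}$ destroys the div--curl form, and no decomposition into genuine Jacobian minors is available for the right-hand side of the $n$-harmonic map system into a general target. The paper does \emph{not} prove an $\cH^1$ bound on the right-hand side; instead it abandons the $\cH^1$--$\BMO$ duality route altogether for this theorem and works below the natural exponent. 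One tests the transformed equation with $\psi=\zeta_0\phi$, where $\nabla\phi$ is the gradient part of the Hodge decomposition of $|\nabla\tu|^{-\eps}P^\top\nabla\tu$; since $(n-\eps)/(1-\eps)>n$, Morrey's embedding makes $\phi\in L^\infty$, and the right-hand side is estimated merely in $L^1$. The decisive gain comes not from $\cH^1$ but from the Coifman--Rochberg--Weiss commutator theorem applied to $\wOmega=T\wOmega$ itself (here $T$ is the divergence-free projector): one obtains $\|\wOmega\|_{L^n}\ale \|\nabla u\|_{L^{n-\eps,\eps}}\,\|\nabla\tu\|_{L^n}$, an extra small factor that is invisible in the naive bound $\|\wOmega\|_{L^n}\ale\|\nabla u\|_{L^n}$.

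Your description of the role of $W^{n/2,2}$ is also off. It does not give ``Morrey control on $\nabla u$'' in any sense used here. Rather, after the CRW step the right-hand side integral is bounded by $C(\eps)r^\eps\|\nabla u\|_{L^{n-\eps,\eps}}^{2-\eps}\|\nabla\tu\|_{L^n}^n$, and one must convert $\|\nabla\tu\|_{L^n}^n$ into something comparable with the $L^{n-\eps,\eps}$ Morrey scale. This is done via the sharp Gagliardo--Nirenberg inequality $\|\nabla\tu\|_{L^n}^n\ale\|\nabla^{n/2}\tu\|_{L^2}^2\,\|\tu\|_{\BMO}^{n-2}$, which produces a factor $\|\nabla^{n/2}u\|_{L^2(B_{3r})}^2$ that is small on small balls and closes the hole-filling iteration for $\|\nabla u\|_{L^{n-\eps,\eps}}$. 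Without this step the exponents simply do not match; that is precisely the place where the extra hypothesis is spent, not in any $\cH^1$ estimate.
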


The tools used in this paper involve the use of Hardy spaces and BMO, initiated by \cite{CLMS} (and developed later by too many authors to mention them all here), and the Rivi\`{e}re--Uhlenbeck decomposition \cite{Riviere-2007}. A prominent role is played by the Coifman--Rochberg--Weiss commutator theorem \cite{CRW}. 

To prove regularity, we work \textit{below the natural exponents of integrability:} we fix $\eps>0$ and show that Morrey norms $\| \nabla u \|_{L^{n-\eps,\eps}(B(a,\rho))} $ of the gradient of a solution decay like $\rho^s$ for some positive $s$; once this is done, a standard application of the Dirichlet Growth Theorem yields H\"older continuity. This idea --- to work with $\| \nabla u \|_{L^{n-\eps,\eps}(B(a,\rho))} $ instead of $\| \nabla u \|_{L^{n}(B(a,\rho))}$ --- is inspired by Iwaniec~\cite{Iwaniec-1992} and Iwaniec--Sbordone \cite{Iwaniec-Sbordone-1994}, and was used earlier in \cite{Strzelecki-2003a,Kolasinski-2010}. In particular, \textit{stability estimates} for the Hodge decomposition serve as an important tool in our construction of test functions. Both for \eqref{eq1} and \eqref{eq.n-harmonic}, the test functions $\psi$ are provided by the gradient parts of the Hodge decomposition of $F=|\nabla \tilde u|^{-\eps} \cdot P^\top \nabla\tilde u$, where $\tilde u$ denotes a cut-off of the solution and $P$ is a map into $\operatorname{SO}(m)$ satisfying $\nabla P\in L^{n}$ (for the $H$-systems, we simply work with $P\equiv\text{Id}$). Due to \cite{Iwaniec-1992,Iwaniec-Sbordone-1994} one knows that the divergence free parts of these Hodge decompositions are appropriately small, and $F$ is close to a gradient vector field.

To understand how the crucial assumption $u\in W^{n/2,2}$ is used, it is good to look at the case of the $H$-system \eqref{eq1}. In a sense, we try to see how different is the general right hand side of \eqref{eq1} from the one for \emph{constant} $H$. Namely, we split the right hand side into two terms; one of them is `good' and has a Jacobian structure, like if $H$ indeed were a constant: it poses no problems; one could handle it in numerous ways, also with a simpler test map (e.g., given by a truncated solution). The other one is `bad': the Jacobian structure is lost. The splitting of the right hand side is, roughly speaking (modulo cut-off and other technical details), based on the Hodge decomposition 
\[
b \, du^1 = d\alpha + \delta\beta\, , \qquad b=H(u).
\]
Here, the factor $b$ is not just bounded, but has vanishing mean oscillation due to the imbedding $W^{1,n}\subset \text{BMO}$. Thus, by the Coifman--Rochberg--Weiss commutator theorem, $\|\delta\beta\|_{L^n} \ale \|b\|_{\BMO}\, \|du^1\|_{L^n}$, so that, luckily, the `bad' part of the right hand side corresponding to $\delta\beta$ is small when compared to $\|\nabla u\|_{L^n}$. Just \textit{how} small it is depends on the dimension: for $n=2$, the natural assumption $u\in W^{1,n}=W^{1,2}=W^{n/2,2}$ is sufficient to match the estimates of the left and right hand side of the system. For $n>2$ things break down \emph{unless} we assume $u\in W^{n/2,2}$; the  higher order derivatives provide a means to handle the terms resulting from $|\delta\beta| \cdot |\nabla u|^{n-1}$ in the right hand side with the help of Gagliardo--Nirenberg inequalities in their sharp form,
\begin{equation}
\label{GN-intro}
\|\nabla u\,\|^{n}_{L^n}  \ale \|\nabla^{n/2} u\, \|_{L^2}^2 \|u\, \|_{\BMO}^{n-2}\, ,
\end{equation}
cf. \cite{Meyer-Riviere-2003}, \cite{Strzelecki-2006} and \cite{Riviere-Strzelecki-2005}. Now, with appropriate care,  $\|\nabla^{n/2} u\, \|_{L^2}^2$ becomes a small factor in front of the `bad' term, and the inequalities $\|b\|_{\BMO}+\|u\, \|_{\BMO} \ale \|\nabla u\|_{L^{n-\eps,\eps}}$ allow to match the estimates of the left and right hand sides. An inspection of the exponents in \eqref{GN-intro} shows that $n/2$ derivatives in $L^2$ is just what you need to close the argument.

For the $n$-harmonic maps the technical details are somewhat different, but the gist of the matter is the same. To start with, the equation does not have Jacobian structure and hence one has to employ the Rivi\`{e}re--Uhlenbeck decomposition in order to transform it into an equivalent equation with a counterpart of Jacobian structure. The remaining part of the proof resembles the one for $H$-systems, and in fact it is slightly simpler, as no decomposition is necessary. The right-hand side of the transformed equation involves a divergence-free factor of the form $A \nabla B$, so by the Coifman--Rochberg--Weiss commutator theorem, its $L^n$ norm can be bounded as $\| A \nabla B \|_{L^n} \ale \| A \|_\BMO \| \nabla B \|_{L^n}$. This upgrade from the naive $\| A \|_\infty$ estimate to $\| A \|_\BMO$ is the crucial source of decay that we establish for $\| \nabla u \|_{L^{n-\eps,\eps}}$. 

For more in-depth outlines of both arguments, we refer to Sections \ref{ch:H-systems} and \ref{ch:n-harmonic}. 

Let us mention that Kolasi\'{n}ski \cite{Kolasinski-2010} proves a weaker variant of Theorem~\ref{Hsystems-regularity}, assuming that $u\in W^{n-1,n'}$ (in his proof, there is no splitting of the right hand side into the `good' and `bad' parts as discussed above; therefore, a larger number of higher order derivatives is needed to match the estimates of the left and right hand sides). Schikorra \cite{Schikorra-2010} proves the regularity of all weak solutions with $\nabla u \in L^n {\text{log}}^{n-1-\eps} L$ for some $\eps>0$.

\smallskip

Finally, we state three related questions.

\begin{question}
Do Theorem~\ref{Hsystems-regularity} and Theorem~\ref{nharmonic-regularity} hold for odd $n>2$?
\end{question}
(One would expect that a  positive answer should use a fractional variant of \eqref{GN-intro}.)

\begin{question} Can one combine the above ideas with the results of Duzaar and Mingione \cite{Duzaar-Mingione-2010} to prove both Theorem~\ref{Hsystems-regularity} and Theorem~\ref{nharmonic-regularity} under the assumption that $\nabla u$ is in the Lorentz space $L^{n,2}$? 
\end{question}
(By the sharp form of the Sobolev imbedding we have $\nabla u \in L^{n,2}$ for $u\in W^{n/2,2}$, so this would be a stronger result.)

\begin{question} Is it possible to assume only that $u\in W^{1,n}$ and use a variant of the difference quotients method to prove that $u\in W^{n/2,2}$ \emph{before} proving regularity? 
\end{question}
(It might be more appropriate to ask for derivatives of \emph{nonlinear} functions of $\nabla u$. Consider e.g. $H$-systems and $n=4$; can one prove in this toy case that if $u\in W^{1,4}$, then $|\nabla u|\nabla u \in W^{1,4/3}$? We do not see any obvious way of achieving that; the highly non-local construction of the test maps in our proofs is one of the obstacles.)

\medskip

The rest of the paper is organized as follows. In Section~2 we gather all the preliminary material which is later used in the proofs. In Section~3, we prove Theorem~\ref{Hsystems-regularity}, and in Section~4 we prove Theorem~\ref{nharmonic-regularity}. The notation throughout the paper is standard; in particular, to avoid clutter, we use $L^n,L^{n-\eps,\eps},\BMO$ etc. to denote the function spaces on $\R^n$.

\medskip\noindent
\textbf{Acknowledgement.} The research of all three authors has been supported by the NCN Grant no. 2016/21/B/ST1/03138.
Moreover, the first named author has been supported by the NCN Grant no. 2020/36/C/ST1/00050.
\section{Preliminaries}

\subsection{Hodge decomposition}

We shall rely on $L^p$ estimates for the Hodge decomposition in $\R^n$, including a stability estimate of Iwaniec and Iwaniec--Sbordone. For the original proof of the stability estimate \eqref{stab-est}, see Iwaniec, \cite[Thm.~8.1]{Iwaniec-1992}

\begin{theorem}[Hodge decomposition and stability estimates]
\label{th.Hodge1}
Let $w:\R^{n}\rightarrow\R^m$ be of class $W^{1,p}$ for some $p>1$. Let $\varepsilon \in (-1,p-1)$. Then \[G:=|\nabla w|^{\varepsilon}\nabla w\] can be written as
$
G=\nabla \alpha + \beta,
$
with $\nabla\alpha, \beta\in L^{p/(1+\varepsilon)}(\R^{n},\R^{nm})$ and $\rm{div}\ \beta = 0$ in the sense of distributions, and 
\[
\| \nabla \alpha \|_{L^{p/(1+\varepsilon)}} + \|\beta\|_{L^{p/(1+\varepsilon)}} \leq C(n,m,p) \|\nabla u\|^{1+\varepsilon}_{L^{p}}.
\]
Moreover, 
\begin{equation}
\label{stab-est}
\|\beta\|_{L^{p/(1+\varepsilon)}} 
	\leq C(n,m,p)|\varepsilon|\,  \|\nabla w\|^{1+\varepsilon}_{L^{p}}.
\end{equation}
\end{theorem}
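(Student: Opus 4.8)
The plan is to obtain the decomposition together with the first bound from the classical Calder\'{o}n--Zygmund theory, and then to prove the stability estimate \eqref{stab-est} by an analytic (Stein) interpolation argument that exploits the fact that the divergence-free part vanishes identically at $\eps=0$. First, set $q:=p/(1+\eps)$; since $\eps\in(-1,p-1)$ we have $1+\eps\in(0,p)$, hence $q\in(1,\infty)$. Pointwise $|G|=|\nabla w|^{\eps}|\nabla w|=|\nabla w|^{1+\eps}$, so $G\in L^{q}(\R^{n},\R^{nm})$ with $\|G\|_{L^{q}}=\|\nabla w\|_{L^{p}}^{1+\eps}$. Applying the Helmholtz--Weyl decomposition on $\R^{n}$ to each of the $m$ blocks $G^{i}=(G^{i}_{j})_{j}\in L^{q}(\R^{n},\R^{n})$ writes $G^{i}=\nabla\alpha^{i}+\beta^{i}$ with $\nabla\alpha^{i}=\nabla(-\Delta)^{-1}\operatorname{div}G^{i}$ a gradient field and $\operatorname{div}\beta^{i}=0$; the map $G^{i}\mapsto\nabla\alpha^{i}$ is a matrix of compositions of Riesz transforms, bounded on $L^{q}$, and hence so is $G^{i}\mapsto\beta^{i}$. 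Stacking over $i$ gives $G=\nabla\alpha+\beta$ and the first estimate $\|\nabla\alpha\|_{L^{q}}+\|\beta\|_{L^{q}}\ale\|\nabla w\|_{L^{p}}^{1+\eps}$ (with a constant depending on $n$, $m$, $q$).

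For \eqref{stab-est}, write $\beta=\mathbf{E}G$, where $\mathbf{E}:=I-\nabla(-\Delta)^{-1}\operatorname{div}$ is the Leray projection onto divergence-free fields. Both $G$ and $\beta$ are positively homogeneous of degree $1+\eps$ in $\nabla w$, as is $\|\nabla w\|_{L^{p}}^{1+\eps}$, so I would normalise $\|\nabla w\|_{L^{p}}=1$ and prove $\|\beta\|_{L^{q}}\ale|\eps|$. When $|\eps|$ is bounded away from $0$ this already follows from the first estimate, so it suffices to treat $0<|\eps|\le\eps_{0}$ with $\eps_{0}:=\tfrac14\min(1,p-1)$. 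The structural point is that for $\eps=0$ the field $G=\nabla w$ is a gradient, hence $\beta=\mathbf{E}(\nabla w)=0$; one must upgrade this to a bound linear in $\eps$. A naive pointwise argument --- writing $\beta=\mathbf{E}\bigl((|\nabla w|^{\eps}-1)\nabla w\bigr)$ and using $\bigl||\nabla w|^{\eps}-1\bigr|\ale|\eps|\,\bigl|\log|\nabla w|\bigr|$ --- breaks down, because $(|\nabla w|^{\eps}-1)\nabla w$ need not lie in $L^{q}$ at all, only $G$ does; this is what forces the complex-analytic method.

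So I would complexify. For $z$ in the strip $\Sigma:=\{\zeta:|\operatorname{Re}\zeta|<2\eps_{0}\}\subset\{-1<\operatorname{Re}\zeta<p-1\}$ put $G_{z}:=|\nabla w|^{z}\nabla w$ (meaning $e^{z\log|\nabla w|}\nabla w$ off the zero set of $\nabla w$, and $0$ on it) and $\beta_{z}:=\mathbf{E}G_{z}$. Then $|G_{z}|=|\nabla w|^{1+\operatorname{Re}z}$, so $G_{z}\in L^{p/(1+\operatorname{Re}z)}$ with norm $1$, the exponent $p/(1+\operatorname{Re}z)$ staying in a compact subinterval of $(1,\infty)$ on $\Sigma$; consequently $\|\beta_{z}\|_{L^{p/(1+\operatorname{Re}z)}}\ale 1$, $\beta_{\eps}=\beta$, and $\beta_{0}=0$. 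By duality and density it suffices to bound $|\langle\beta,\phi\rangle|$ for $\phi\in C_{c}^{\infty}(\R^{n},\R^{nm})$ with $\|\phi\|_{L^{q'}}=1$, $q':=p/(p-1-\eps)$. The Stein-type test family
\[
\phi_{z}:=|\phi|^{\lambda(z)}(\phi/|\phi|),\qquad \lambda(z):=\frac{p-1-z}{p-1-\eps}
\]
(with $\phi_{z}:=0$ on $\{\phi=0\}$) has $\phi_{\eps}=\phi$, and since $\operatorname{Re}\lambda(z)\cdot\tfrac{p}{p-1-\operatorname{Re}z}=q'$ one gets $\|\phi_{z}\|_{L^{r}}=\|\phi\|_{L^{q'}}^{q'/r}=1$ for $r=p/(p-1-\operatorname{Re}z)$; that is, the test family exactly compensates the drift of the H\"older-dual exponent as $\operatorname{Re}z$ varies. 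Now $\widetilde{F}(z):=\langle\beta_{z},\phi_{z}\rangle=\int_{\R^{n}}|\nabla w|^{z}\,\nabla w:\mathbf{E}\phi_{z}\dd x$ is holomorphic on $\Sigma$ (by Morera's theorem, using the domination $|\,|\nabla w|^{z}\nabla w:\mathbf{E}\phi_{z}|\le|\nabla w|^{1+\operatorname{Re}z}|\mathbf{E}\phi_{z}|$, whose $\R^n$-integral is $\le\|G_{z}\|_{L^{p/(1+\operatorname{Re}z)}}\|\mathbf{E}\phi_{z}\|_{L^{r}}\ale 1$ uniformly on compact subsets of $\Sigma$), bounded there by some $M=M(n,m,p)$, with $\widetilde{F}(0)=\langle\beta_{0},\phi_{0}\rangle=0$ and $\widetilde{F}(\eps)=\langle\beta,\phi\rangle$. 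The Schwarz lemma for the strip (compose with a conformal map $\Sigma\to\mathbb{D}$ carrying $0$ to $0$) then gives $|\widetilde{F}(\eps)|\le M\,|B(\eps)|\ale M|\eps|$ for $|\eps|\le\eps_{0}$; taking the supremum over admissible $\phi$ yields $\|\beta\|_{L^{q}}\ale|\eps|$, which is \eqref{stab-est}.

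The hard part is the third step: decoupling the bad pointwise behaviour of $|\nabla w|^{\eps}-1$ by passing to the bounded holomorphic family $z\mapsto|\nabla w|^{z}\nabla w$, and choosing $\phi_{z}$ so that the $\operatorname{Re}z$-dependence of the natural integrability exponent is cancelled --- this is precisely what makes $\widetilde{F}$ uniformly bounded on a \emph{fixed} strip that still contains the distinguished point $z=0$ at which $\beta$ vanishes, so that the Schwarz lemma produces the factor $|\eps|$. The remaining points --- holomorphy of the vector-valued families, the density and duality reductions, and the observation that all constants are controlled as long as $\eps$ stays in a compact subinterval of $(-1,p-1)$ (in particular for small $\eps$, which is the regime used in this paper) --- are routine.
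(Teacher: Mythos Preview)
The paper does not give its own proof of this theorem: it simply cites Iwaniec \cite[Thm.~8.1]{Iwaniec-1992} for the stability estimate \eqref{stab-est} and remarks that the more general Theorem~\ref{thm:is-stability} of Iwaniec--Sbordone \cite{Iwaniec-Sbordone-1994} yields it with a much simpler argument. So there is nothing in the paper to compare your proof against line by line.

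Your proof is correct. The first paragraph (existence of the decomposition and the basic $L^{p/(1+\eps)}$ bound via boundedness of the Riesz-transform matrix) is standard, and your Stein-interpolation argument for \eqref{stab-est} is sound: the holomorphic family $z\mapsto G_z=|\nabla w|^z\nabla w$ has $\mathbf{E}G_0=0$, the test family $\phi_z$ is rigged so that the dual exponent tracks $\operatorname{Re}z$, and the Schwarz lemma on the strip extracts the factor $|\eps|$. The technical points you flag as routine (holomorphy of $\widetilde F$, uniform boundedness on the strip, passage to general $w$ by approximation with $w_k\in C_c^\infty$ so that $G_{z,k}$ lies in a fixed $L^r$) can indeed be filled in without difficulty.

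In terms of relation to the literature the paper invokes: your argument is essentially the Iwaniec--Sbordone route, specialised to $T=\mathbf{E}$ and $f=\nabla w$. Indeed, Theorem~\ref{thm:is-stability} applied with this $T$ and $f$ gives $\|TS^{-\eps}(\nabla w)-S^{-\eps}(T\nabla w)\|_{L^{p/(1+\eps)}}\le C|\eps|\,\|\nabla w\|_{L^p}$, and since $T\nabla w=0$ the second term vanishes, yielding \eqref{stab-est} immediately. The complex-interpolation mechanism you describe is exactly how Theorem~\ref{thm:is-stability} is proved in \cite{Iwaniec-Sbordone-1994}; you have in effect reproduced that proof in the special case needed here. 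By contrast, Iwaniec's original 1992 proof proceeds by a direct (and considerably longer) real-variable computation. What your approach buys over citing Theorem~\ref{thm:is-stability} is self-containment; what it costs is that the general commutator formulation of Theorem~\ref{thm:is-stability} is still needed elsewhere in the paper (Section~4.4), so one cannot dispense with it entirely.
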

The proof of \eqref{stab-est} in \cite{Iwaniec-1992} relies on a long technical computation; to deal with $n$-harmonic maps, it is convenient to use another, more general theorem, due -- with a much simpler proof -- to Iwaniec and Sbordone \cite{Iwaniec-Sbordone-1994}:

\begin{theorem}
\label{thm:is-stability}
Assume that $T: L^{r}(\R^{n},\R^m) \rightarrow L^{r}(\R^{n},\R^m)$ is a linear bounded operator for each $r\in [r_1,r_2]$. 
For
\[
\frac r{r_2}-1 \le \eps \le \frac r{r_1}-1 
\]
consider
\[ S^{\varepsilon}: L^r(\R^n,\R^m)\to L^{r/(1+\eps)}(\R^n,\R^m),  \qquad S^{\varepsilon}(f) :=\Big(\frac{|f|}{\|f\|_{L^r}}\Big)^{\varepsilon}f.
\]
Then 
\[ \|TS^{\varepsilon}(f) - S^{\varepsilon}(Tf)\|_{L^{r/(1+\eps)}} \leq C_r |\varepsilon| \, \|f\|_{L^r} \]
for each $f\in L^{r}.$
\end{theorem}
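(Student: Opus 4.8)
The plan is to prove the estimate by the complex (Stein) interpolation method, in the guise of a Schwarz lemma on a strip. The only structural input is the trivial identity $S^0=\Id$, so that $TS^0(f)-S^0(Tf)=0$; the assertion is the quantitative statement that this expression, viewed as an analytic function of the exponent, vanishes to first order at $\eps=0$ with a controlled derivative.

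First I would reduce and set up. By the homogeneity $S^\eps(\lambda f)=\lambda S^\eps(f)$ and the linearity of $T$, both sides scale linearly in $f$, so it suffices to assume $\|f\|_{L^r}=1$; then $S^\eps(f)=|f|^\eps f$ and $S^\eps(Tf)=\|Tf\|_{L^r}^{-\eps}|Tf|^\eps Tf$ (with $S^\eps(Tf)=0$ if $Tf=0$). By Riesz--Thorin between $r_1$ and $r_2$, $M:=\sup_{q\in[r_1,r_2]}\|T\|_{L^q\to L^q}\le\max(\|T\|_{L^{r_1}\to L^{r_1}},\|T\|_{L^{r_2}\to L^{r_2}})<\infty$. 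Writing $s_1:=\tfrac r{r_2}-1\le 0\le\tfrac r{r_1}-1=:s_2$, work on the closed strip $\Sigma:=\{z\in\mathbb{C}:s_1\le\Re z\le s_2\}$, which contains $0$, and introduce the analytic families $g_z:=|f|^z f$ and $h_z:=|Tf|^z\,Tf$ (taken to vanish where $f$, resp.\ $Tf$, does; this is harmless since $1+\Re z\in[\tfrac r{r_2},\tfrac r{r_1}]>0$ on $\Sigma$), and set
\[
H(z):=Tg_z-\|Tf\|_{L^r}^{-z}\,h_z \qquad(\text{with }H(z):=Tg_z\text{ when }Tf=0).
\]
Then $z\mapsto H(z)$ is analytic on the interior of $\Sigma$ and continuous up to the boundary (checked in the standard way by pairing with an admissible analytic family of test functions and invoking Morera's theorem together with Hölder's inequality to dominate the integrands). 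Along any line $\Re z=s$ one has $\|g_z\|_{L^{r/(1+s)}}=1$ and $\|h_z\|_{L^{r/(1+s)}}=\|Tf\|_{L^r}^{1+s}$, so, since $r/(1+s)\in[r_1,r_2]$,
\[
\|H(z)\|_{L^{r/(1+s)}}\le M\,\|g_z\|_{L^{r/(1+s)}}+\|Tf\|_{L^r}^{-s}\|h_z\|_{L^{r/(1+s)}}\le M+\|Tf\|_{L^r}\le 2M .
\]
Finally $H(0)=Tf-Tf=0$, and for real $\eps\in\Sigma$ one has $TS^\eps(f)-S^\eps(Tf)=H(\eps)$.

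The conclusion is then a Schwarz-lemma argument. The quotient $\widetilde H(z):=H(z)/z$ extends analytically across $z=0$ (removable singularity, as $H(0)=0$) to a bounded analytic $L^{r/(1+\Re z)}$-valued function on $\Sigma$ --- bounded away from $0$ by $2M/|z|$ and near $0$ by the maximum principle on a small disc inside $\Sigma$ --- and on the boundary lines $\Re z=s_j$ it satisfies $\|\widetilde H(z)\|_{L^{r/(1+s_j)}}\le 2M/|s_j|$. By the three-lines theorem in its vector-valued form with the target exponent $r/(1+\Re z)$ varying along $\Sigma$ (realized via duality against an analytic family of test functions), for real $\eps\in\Sigma$
\[
\|\widetilde H(\eps)\|_{L^{r/(1+\eps)}}\le\max\Big(\tfrac{2M}{|s_1|},\tfrac{2M}{|s_2|}\Big)=\frac{2M}{\min(|s_1|,|s_2|)}=:C_r,
\]
and hence $\|TS^\eps(f)-S^\eps(Tf)\|_{L^{r/(1+\eps)}}=|\eps|\,\|\widetilde H(\eps)\|_{L^{r/(1+\eps)}}\le C_r|\eps|$; restoring the normalization yields the theorem. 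In the degenerate cases $r=r_1$ or $r=r_2$, where $0\in\partial\Sigma$, I would instead transplant $H$ to the unit disc via a conformal map $\omega:\Sigma\to\mathbb{D}$ with $\omega(0)=0$ and apply the classical Schwarz lemma, obtaining $\|H(\eps)\|_{L^{r/(1+\eps)}}\le 2M\,|\omega(\eps)|\le C_r|\eps|$ since $\omega$ is $C^1$ with $\omega(0)=0$ on the compact segment $[s_1,s_2]$.

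The main obstacle I expect is not any single estimate but the careful execution of the analytic machinery: justifying that the pairings of $z\mapsto H(z)$ with an admissible family of test functions are analytic and of the right boundedness class for the three-lines theorem to apply with the target exponent $r/(1+\Re z)$ varying along the strip, and verifying that the normalizing factor $\|Tf\|_{L^r}^{-z}$ (and the degenerate case $Tf=0$, in which $h_z\equiv 0$ and the argument runs unchanged) cause no difficulty. Everything else reduces to the elementary identities $\|g_z\|_{L^{r/(1+\Re z)}}=\|f\|_{L^r}^{1+\Re z}$, $\|h_z\|_{L^{r/(1+\Re z)}}=\|Tf\|_{L^r}^{1+\Re z}$, and routine bookkeeping.
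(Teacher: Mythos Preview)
The paper does not include its own proof of this statement; it is quoted as a preliminary result of Iwaniec and Sbordone \cite{Iwaniec-Sbordone-1994}, and your approach via complex interpolation on a strip---embedding $S^\eps$ in the analytic family $z\mapsto |f|^z f$, using $H(0)=0$, and applying a Schwarz/three-lines argument to $H(z)/z$---is exactly the method of that reference. So in substance your proposal matches the intended source.

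One point deserves correction. Your treatment of the degenerate cases $r=r_1$ or $r=r_2$ does not work as written: if $0\in\partial\Sigma$, a conformal map $\omega:\Sigma\to\mathbb{D}$ necessarily sends $0$ to $\partial\mathbb{D}$, not to the center, so you cannot arrange $\omega(0)=0$ and invoke the classical Schwarz lemma. Relatedly, your constant $C_r=2M/\min(|s_1|,|s_2|)$ blows up at the endpoints. A clean fix (also the one implicit in \cite{Iwaniec-Sbordone-1994}) is to note that for fixed $\eps\neq 0$ with, say, $r=r_2$ and $\eps>0$, one may work on the substrip $\{\eps/2\le \Re z\le s_2\}$: the point $\eps$ is now in the interior, $H$ is analytic there, and on the new left edge $|z|\ge \eps/2$ so the bound $\|H(z)/z\|\le 4M/\eps$ is still harmless because you only need $\|H(\eps)\|\ale |\eps|$; more elegantly, divide instead by a conformal factor vanishing at $0$ that stays bounded below on both edges (e.g.\ $\sinh(\pi z/s_2)$ on $\{0\le\Re z\le s_2\}$). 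For the application in the paper this is moot, since there $r=n-\eps$ lies strictly between the endpoints.
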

In Section~4.4, we use this result combined with the Coifman--Rochberg--Weiss commutator estimate to deal with a test map produced in a non-local way, by the Hodge decomposition of $|\nabla \tu|^{-\eps}P^\top \nabla \tu$, with $P$ being a matrix--valued map of class $W^{1,n}$, cf. Theorem~\ref{th:uhlenbeck}. A word of warning: in Theorems~\ref{th.Hodge1}--\ref{thm:is-stability} $\eps$ \emph{can be negative}; in Sections~3--4 we use both results with $\eps$ replaced by $-\eps$, to stick to a standard analyst's habit.

\subsection{Morrey spaces. Dirichlet Growth Theorem} We also use Morrey spaces; actually, the H\"{o}lder continuity of solutions in Theorems~\ref{Hsystems-regularity}--\ref{nharmonic-regularity} is obtained upon an application of Dirichlet Growth Theorem. 

\begin{definition} Let $\Omega \subset \R^{n}$ be open. The Morrey space $L^{p,s}(\Omega)$ is the subspace of $L^{p}(\Omega)$ consisting of all functions $f$ for which the norm 
\[ 
\|f\|_{L^{p,s}(\Omega)} 
:= \sup_{x_{0}\in \R^n, \, r > 0}
\biggl(\frac{1}{r^{s}}\int_{B(x_0,r)\cap \Omega}|f(y)|^{p}dy\biggr)^{1/p}
\]
is finite. 
\end{definition}

\begin{theorem}[Dirichlet Growth Theorem]
\label{th-DGT}
Let $s>0$ and $p \ge 1$. Assume that $u \in W^{1,p}(B(x_0,2R))$ satisfies
\[ 
r^{p - n}\int_{B(z,r)}|\nabla u|^p dx \leq a\, r^s
\]
for every $z \in B(x_0,R)$ and all radii $0<r<R$. Then $u$ has a representative satisfying 
\[
|u(x_1) - u(x_2)| \leq C_0(n,s) a^{1/p}|x_1 - x_2|^{s/p}
\]
for all $x_1,x_2 \in B(x_0,R)$. 
\end{theorem}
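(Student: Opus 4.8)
The plan is to recognize the stated growth hypothesis as a Campanato-type oscillation bound and then run the classical telescoping argument on dyadic averages. Throughout, for a ball $B=B(z,\rho)$ write $u_{z,\rho}:=\dashint_{B(z,\rho)}u\,\dd x$ for the mean of $u$ over $B$; note that whenever $z\in B(x_0,R)$ and $0<\rho<R$ the ball $B(z,\rho)$ lies inside $B(x_0,2R)$, so $u\in W^{1,p}(B(z,\rho))$ and the hypothesis applies on it.

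First I would apply the $L^p$ Poincar\'e inequality on such a ball: $\dashint_{B(z,\rho)}|u-u_{z,\rho}|^p\,\dd x \le C(n)\,\rho^p\dashint_{B(z,\rho)}|\nabla u|^p\,\dd x$. Inserting the hypothesis $\int_{B(z,\rho)}|\nabla u|^p\,\dd x\le a\,\rho^{s+n-p}$ turns the right-hand side into $C(n)\,a\,\rho^s$, so that
\[
\Big(\dashint_{B(z,\rho)}|u-u_{z,\rho}|^p\,\dd x\Big)^{1/p}\le C(n)\,a^{1/p}\,\rho^{s/p}
\]
for every $z\in B(x_0,R)$ and $0<\rho<R$. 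Next, for fixed such $z$ and $\rho$ I would compare consecutive dyadic averages: since $B(z,2^{-k-1}\rho)\subset B(z,2^{-k}\rho)$ with volume ratio $2^n$, H\"older's inequality gives $|u_{z,2^{-k-1}\rho}-u_{z,2^{-k}\rho}|\le 2^n\big(\dashint_{B(z,2^{-k}\rho)}|u-u_{z,2^{-k}\rho}|^p\big)^{1/p}\le 2^nC(n)\,a^{1/p}(2^{-k}\rho)^{s/p}$. Because $s/p>0$ the geometric series $\sum_k(2^{-k}\rho)^{s/p}$ converges, so $(u_{z,2^{-k}\rho})_k$ is Cauchy; at a.e.\ $z$ its limit equals $u(z)$ by Lebesgue differentiation, so defining the representative $\tilde u(z):=\lim_{k\to\infty}u_{z,2^{-k}\rho}$ (the limit now exists for every $z\in B(x_0,R)$) and summing the telescoping series yields
\[
|\tilde u(z)-u_{z,\rho}|\le C_1(n,s)\,a^{1/p}\,\rho^{s/p}\qquad\text{for all }z\in B(x_0,R),\ 0<\rho<R.
\]

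Finally, given $x_1,x_2\in B(x_0,R)$ with $d:=|x_1-x_2|$, I would first treat the case $2d<R$: put $B:=B(x_1,2d)$, which is admissible, and observe $B(x_1,d)\cup B(x_2,d)\subset B$ with each of the smaller balls of volume $2^{-n}|B|$. Splitting $|\tilde u(x_1)-\tilde u(x_2)|\le |\tilde u(x_1)-u_{x_1,d}|+|u_{x_1,d}-u_B|+|u_B-u_{x_2,d}|+|u_{x_2,d}-\tilde u(x_2)|$ and bounding the outer terms by the oscillation estimate above and each middle term by $2^n\big(\dashint_B|u-u_B|^p\big)^{1/p}\le 2^nC(n)\,a^{1/p}(2d)^{s/p}$ gives $|\tilde u(x_1)-\tilde u(x_2)|\le C_0(n,s)\,a^{1/p}\,d^{s/p}$. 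For $2d\ge R$ one connects $x_1$ to $x_2$ along the segment $[x_1,x_2]\subset B(x_0,R)$ by a chain of at most a dimensional-constant number of points at mutual distance $<R/4$, applies the previous case to consecutive pairs, and absorbs the number of links (together with the factor $(R/4)^{s/p}\le d^{s/p}$) into $C_0(n,s)$. The argument is entirely classical; the only points requiring care are keeping every ball used centered in $B(x_0,R)$ with radius below $R$ (so that both $u\in W^{1,p}$ there and the growth hypothesis are available), and bookkeeping the constants from Poincar\'e, the $2^n$ volume ratios and the convergent geometric series into the final $C_0(n,s)$.
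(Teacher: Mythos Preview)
The paper does not give a proof of this theorem; it is quoted in the preliminaries (Theorem~\ref{th-DGT}) as a standard tool and used as a black box in Sections~\ref{sub:3.4} and~\ref{ch:n-harm-finish}. So there is nothing to compare against, and the relevant question is simply whether your argument is correct. It is: this is the classical Campanato--Morrey telescoping proof (Poincar\'e on dyadic balls, sum the geometric series, then compare means of two nearby points via a common enlarged ball), and each step is sound. Two cosmetic remarks: the constant you obtain depends on $n$ and on $s/p$ (through $\sum_k 2^{-ks/p}=(1-2^{-s/p})^{-1}$), so the paper's notation $C_0(n,s)$ is slightly imprecise, not your proof; and in the chaining step for $2d\ge R$ the number of links is bounded by an \emph{absolute} constant (the diameter is at most $2R$ and the step is $R/4$), not a dimensional one---harmless, since it is still a constant.
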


\subsection{BMO, Hardy space estimates for Jacobians, and commutators}

\begin{definition}
A locally
integrable function $f$ belongs to the space of functions of
{\em bounded mean oscillation\/}, ${\BMO}(\R^n)$, if and only if 
\[
\|u\|_{\BMO} :=
\sup \left(\; \mvint_B |u(x)-u_B|\, dx \right) < +\infty,
\]
the supremum being taken over all balls $B$ in $\R^n$.	
\end{definition}

\begin{definition}
Let $\mathcal{F}$ be a set of all $\phi \in C_{c}^{\infty}(B(0,1))$ such that 
\[ 
\sup_{B(0,1)}|\nabla \phi| \leq 1. 
\]
For $\phi \in \mathcal{F}$ set $\phi_{\varepsilon}(x) =\varepsilon^{-n}\phi(x/\eps)$, $\eps>0$. We say that an integrable function $u: \R^{n} \rightarrow \R$ is in the Hardy space $\mathcal{H}^{1}(\R^{n})$ if and only if the maximal function 
\[  
Mu(x) = \sup_{\phi \in \mathcal{F}}\sup_{\eps>0}|(\phi \ast u)(x)| 
\]
belongs to $L^{1}.$ The norm of $u \in \mathcal{H}^{1}$ is defined as
\[
\|u\|_{\mathcal{H}^{1}(\R^{n})} := \|Mu\|_{L^{1}(\R^{n})}.   
\]
\end{definition}

Several estimates in this paper rely on the result of C.~Fefferman and E.~Stein \cite{Fefferman-Stein-1972}. 

\begin{theorem}
\label{th:h1bmo}
$\BMO(\R^{n})$ is the dual space to $\mathcal{H}^{1}(\R^{n})$; whenever $b\in\BMO(\R^n)$ and $h\in\mathcal{H}^{1}(\R^{n})$ are such that $b \cdot h$ is integrable, then
\[
\int_{\R^n} b \cdot h \, dx \ale \|b\|_{\BMO} \|h\|_{\mathcal{H}^{1}}\, .
\]
\end{theorem}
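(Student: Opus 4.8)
The statement is the classical Fefferman--Stein duality theorem, so I would route the proof through the \emph{atomic decomposition} of $\mathcal{H}^1$, proving the displayed pairing inequality in full and sketching the representation half. The external input I rely on is the following: every $h\in\mathcal{H}^1(\R^n)$ can be written as $h=\sum_j\lambda_j a_j$, convergent in $\mathcal{H}^1$, where each $a_j$ is supported in a ball $B_j$ with $\int a_j=0$ and $\|a_j\|_{L^\infty}\le|B_j|^{-1}$, and $\sum_j|\lambda_j|\le C(n)\,\|h\|_{\mathcal{H}^1}$; moreover every atom has $\mathcal{H}^1$ norm $\le C(n)$, and finite atomic sums are dense in $\mathcal{H}^1$.

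Granting this, fix $b\in\BMO$. If $a$ is an atom supported in a ball $B$, its vanishing mean gives
\[ \int_{\R^n} b\,a = \int_B (b-b_B)\,a, \qquad\text{hence}\qquad \Bigl|\int_{\R^n} b\,a\Bigr| \le \|a\|_{L^\infty}\int_B|b-b_B| \le \mvint_B|b-b_B| \le \|b\|_{\BMO}. \]
Writing $h$ first as a finite atomic sum, where the interchange of $\int$ and $\sum$ and the integrability hypothesis $b\cdot h\in L^1$ are unproblematic, we obtain $\bigl|\int b\,h\bigr|\le\sum_j|\lambda_j|\,\bigl|\int b\,a_j\bigr|\le\|b\|_{\BMO}\sum_j|\lambda_j|\le C(n)\,\|b\|_{\BMO}\,\|h\|_{\mathcal{H}^1}$, and the general case follows by density. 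This is the asserted estimate, and it shows that $b\mapsto\bigl(h\mapsto\int b\,h\bigr)$ embeds $\BMO$ continuously into $(\mathcal{H}^1)^*$.

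For the reverse inclusion, take $\ell\in(\mathcal{H}^1)^*$. On a fixed ball $B$, the mean-zero functions of $L^2(B)$ are, up to a $B$-dependent normalization, $L^2$-atoms, and so embed continuously into $\mathcal{H}^1$; thus $\ell$ restricts there to a bounded functional and, by the Riesz representation theorem, equals $f\mapsto\int_B\beta_B f$ for some $\beta_B\in L^2(B)$. On overlapping balls the $\beta_B$ differ by additive constants, hence patch to a single $b\in L^1_{\mathrm{loc}}(\R^n)$, unique up to a constant, with $\ell(f)=\int b\,f$ for every mean-zero $f\in L^2$ of bounded support. To estimate $\|b\|_{\BMO}$, fix a ball $B$ and set
\[ g := \Bigl(\operatorname{sgn}(b-b_B) - \mvint_B\operatorname{sgn}(b-b_B)\Bigr)\chi_B , \]
so that $\operatorname{supp}g\subseteq B$, $\int g=0$, $\|g\|_{L^\infty}\le2$, and therefore $g/(2|B|)$ is an atom. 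Since $\int_B(b-b_B)=0$,
\[ \int_B|b-b_B| = \int_B (b-b_B)\,g = \int_{\R^n} b\,g = 2|B|\,\ell\!\left(\tfrac{g}{2|B|}\right) \le 2C(n)\,|B|\,\|\ell\| , \]
whence $\mvint_B|b-b_B|\le 2C(n)\,\|\ell\|$, i.e.\ $\|b\|_{\BMO}\ale\|\ell\|$; combined with the previous paragraph this identifies $(\mathcal{H}^1)^*$ with $\BMO$.

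The real work is hidden in the atomic decomposition itself --- the passage from the grand-maximal-function definition of $\mathcal{H}^1$ used above to the atomic description --- which is the Fefferman--Stein/Coifman--Latter argument and rests on a Calder\'on--Zygmund/Whitney decomposition of the superlevel sets $\{Mh>\lambda\}$. Since this is standard, in the paper it suffices to cite \cite{Fefferman-Stein-1972} together with the usual references for atomic $\mathcal{H}^1$; only the two short test-function computations above are genuinely self-contained. (Alternatively one could follow Fefferman's original route via the Littlewood--Paley / area-function characterization and Carleson-measure estimates, bypassing atoms, but the atomic argument is shorter to present.)
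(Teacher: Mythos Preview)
Your sketch is correct and follows the standard atomic route to Fefferman--Stein duality; however, the paper does not prove this theorem at all. It is stated in the preliminaries and simply attributed to \cite{Fefferman-Stein-1972}, so there is no ``paper's own proof'' to compare against --- the result is invoked as a black box.

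One minor caution about your writeup: the passage ``the general case follows by density'' hides a genuine subtlety. Convergence $h_k\to h$ in $\mathcal{H}^1$ gives $L^1$ convergence, but since $b$ is only in $L^1_{\mathrm{loc}}$ and unbounded in general, this alone does not force $\int b\,h_k\to\int b\,h$; the hypothesis $b\cdot h\in L^1$ does not automatically transfer to the approximants either. The clean fix is to first establish the inequality for bounded compactly supported $b$ (where density is unproblematic), then handle general $b\in\BMO$ by truncation and a limiting argument, or alternatively to appeal to the fact that the atomic series converges pointwise a.e.\ with a suitable dominating function. This is well known and does not affect the overall validity of your outline, but if you were to write this out in full it would need to be addressed.
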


To work with Theorem~\ref{th:h1bmo}, we need an estimate for the Hardy space norm of a Jacobian determinant, due to Coifman, Lions, Meyer and Semmes \cite{CLMS}.

\begin{theorem}[$\mathcal{H}^1$ estimates for Jacobians]
\label{our.clms}
Let $u=(u^{1},\ldots,u^{n})\in W^{1,1}_{\text{loc}}(\R^{n},\R^{n})$ be such that 
\[
\nabla u^i \in L^{p_i}(\R^n), \qquad p_i>1, \qquad \sum_{i=1}^n \frac 1{p_i} = 1.
\]
Then $\det Du \in \cH^1(\R^{n})$
and
\begin{equation}
\label{det-in-h1}
    \| \det Du\|_{\cH^1(\R^{n})} \leq C\prod_{i=1}^{n}\|\nabla u^{i}\|_{L^{p_i}(\R^{n})}
\end{equation}
for some constant $C=C(n,p_1,\ldots, p_n)$.
\end{theorem}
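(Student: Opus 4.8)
The plan is to reduce the general multi-exponent statement to the classical equal-exponent case $p_1=\dots=p_n=n$ (for which the $\mathcal H^1$ estimate of Coifman--Lions--Meyer--Semmes is known) by a change of variables, and then to check that the resulting constant can be tracked through the substitution. First I would recall the identity, valid pointwise a.e. for $W^{1,1}_{\text{loc}}$ maps, that $\det Du$ has the divergence structure $\det Du = \sum_{j=1}^n \partial_j\big(u^1\, (\operatorname{cof} Du)^j_1\big)$, where the cofactor entries involve only $\nabla u^2,\dots,\nabla u^n$; this is the standard way the Jacobian enters, and it is how one verifies $\det Du\in L^1_{\text{loc}}$ in the first place, by Hölder with exponents $p_1,\dots,p_n$. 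The actual $\mathcal H^1$ bound, however, I would get from a duality/atomic argument: test $\det Du$ against a $\BMO$ function $\phi$ and use the div-curl structure. Concretely, for $\phi\in C_c^\infty$ one has $\int \phi\,\det Du\,dx = -\int u^1\,\nabla\phi\cdot(\operatorname{cof} Du)^{\cdot}_1\,dx$, and the vector field $(\operatorname{cof} Du)^{\cdot}_1$ is divergence-free; combining the $L^{p_1}$ bound on $u^1$ (after subtracting a constant, locally) with the $L^{p_i}$ bounds on the remaining gradients and the Fefferman--Stein duality gives the claim.

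The cleaner route, which I would actually write up, is the scaling/substitution reduction. Given exponents $p_i>1$ with $\sum 1/p_i=1$, set $q_i := p_i/n$, so $\sum 1/(nq_i)=1/n$, i.e. $\sum 1/q_i$ need not equal anything nice — so that does not immediately work. Instead I would use an anisotropic dilation: the Jacobian determinant transforms multiplicatively under precomposition with a linear map, $\det D(u\circ A) = \det A \cdot (\det Du)\circ A$, while the $\mathcal H^1$ norm is dilation-covariant, $\|f(\lambda\cdot)\|_{\mathcal H^1} = \lambda^{-n}\|f\|_{\mathcal H^1}$, and the $L^{p_i}$ norms of $\nabla u^i$ scale like $\lambda^{1-n/p_i}$. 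Choosing $\lambda$ does not separate the exponents either; the genuine tool is Hölder's inequality combined with the \emph{known} endpoint case. So the real plan is: (i) prove the endpoint $p_i=n$ for all $i$ by the duality argument sketched above, which is the Coifman--Lions--Meyer--Semmes theorem proper; (ii) for general exponents, write $\det Du = \sum_{\sigma} \operatorname{sgn}(\sigma)\prod_i \partial_{\sigma(i)} u^i$ and interpolate: the bilinear (more precisely $n$-linear) map $(\nabla u^1,\dots,\nabla u^n)\mapsto \det Du$ is bounded from $L^n\times\dots\times L^n$ to $\mathcal H^1$, and one upgrades to the scale of exponents $(p_1,\dots,p_n)$ with $\sum 1/p_i = 1$ by multilinear complex interpolation (Stein--Weiss / the fact that $\mathcal H^1$ is a complex interpolation space and $L^{p_i}$ form an interpolation scale) around the symmetric point.

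The key steps, in order, are: (1) establish the divergence/div-curl structure of $\det Du$ and note $\det Du\in L^1_{\text{loc}}$ under the hypotheses, via Hölder with exponents $p_1,\dots,p_n$; (2) prove the symmetric-exponent estimate $\|\det Du\|_{\mathcal H^1}\le C(n)\prod_i \|\nabla u^i\|_{L^n}$ by pairing with $\BMO$ and using that each $u^i-(u^i)_B$ is controlled in $L^n_{\text{loc}}$ while the complementary cofactor field is divergence-free, so Fefferman--Stein duality applies (this is where one invokes \cite{CLMS} itself); (3) run a multilinear complex-interpolation argument through the point $(n,\dots,n)$ to obtain all admissible exponent tuples, with the constant depending on $n,p_1,\dots,p_n$; (4) remove the smoothness/decay assumptions by a density/truncation argument, approximating $u^i$ by smooth compactly supported maps, using that all the norms in \eqref{det-in-h1} are continuous under the approximation and that $\mathcal H^1$ is complete. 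The main obstacle I expect is step (2)--(3): making the multilinear interpolation rigorous (one must interpolate an $n$-linear operator with values in a Hardy space, being careful that $\mathcal H^1$ rather than $L^1$ is the correct target and that the interpolation functor applies), and tracking the dependence of the constant; a fully self-contained alternative would be to redo the div-curl/paraproduct decomposition of Coifman--Lions--Meyer--Semmes directly with the unequal exponents, which is possible but more laborious. Since the statement is quoted as a known theorem, I would in fact present only a short proof of the structural identity and a reference to \cite{CLMS} for the $\mathcal H^1$ bound, noting that the passage from equal to general exponents is by interpolation.
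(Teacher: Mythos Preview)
The paper does not actually supply a proof of this theorem: it simply states the result, attributes it to \cite{CLMS}, and adds the remark that although the mixed-exponent inequality is not \emph{stated} in \cite{CLMS}, it ``follows from the proof presented there''. So your bottom line --- cite CLMS and move on --- is exactly what the paper does.

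That said, the justification you offer for the passage from the symmetric case $p_1=\dots=p_n=n$ to general exponents is not sound. Multilinear complex interpolation requires at least two endpoint estimates (and to cover the whole $(n{-}1)$-dimensional simplex $\{\sum 1/p_i=1,\ p_i>1\}$ you would need $n$ affinely independent ones). A single bound at the barycentre $(n,\dots,n)$ gives you nothing beyond that point; there is no way to ``interpolate around'' it. What actually happens --- and what the paper alludes to --- is simpler: the original CLMS argument, based on the maximal-function characterisation of $\mathcal H^1$ together with H\"older's inequality for the integrand $\prod_i |\nabla u^i|$, goes through verbatim once you replace the single exponent $n$ by the tuple $(p_1,\dots,p_n)$ with $\sum 1/p_i=1$. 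No interpolation is needed; this is also why the constant depends only on $n$ and on a compact range $[\underline p,\overline p]$ containing the $p_i$, as the paper notes. Your ``fully self-contained alternative'' (redo the CLMS estimate directly with unequal exponents) is therefore not more laborious --- it \emph{is} the CLMS proof, with a cosmetic change in the H\"older step.

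A smaller caution: your sketch of step~(2) via $\mathcal H^1$--$\BMO$ duality is delicate, since $\BMO=(\mathcal H^1)^\ast$ does not by itself let you conclude membership in $\mathcal H^1$ from uniform bounds of pairings; the standard route in \cite{CLMS} is the direct maximal-function estimate, not duality.
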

Inequality \eqref{det-in-h1} was not explicitly stated in \cite{CLMS} but follows from the proof presented there. If $p_i$ lie in some compact interval $[\underline{p},\overline{p}] \subseteq (1,\infty)$, then the constant $C$ in \eqref{det-in-h1} depends in fact only on $n, \underline{p},\overline{p}$.

Another ingredient of the proof, as already mentioned in the Introduction, is provided by the following theorem \cite{CRW}.

\begin{theorem}[Coifman--Rochberg--Weiss commutator estimate]
\label{th:CRW-commutator}
Let $T:L^{p}(\R^n)\rightarrow L^{p}(\R^n)$, $p>1$, be a Calder\'{o}n--Zygmund singular integral operator and let $b \in \BMO(\R^n)$. Then, the commutator $[b,T]$ defined as
\[ 
[b,T](f) := b \cdot T(f)-T(b \cdot f)
\]
is bounded on $L^{p}(\R^n)$ and 
\[\|[b,T](f)\|_{L^p} \ale\|b\|_{\BMO}\, \|f\|_{L^p},\]
with a constant that depends on $p$ and $T$.
\end{theorem}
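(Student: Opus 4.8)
The plan is to establish this classical estimate by the Fefferman--Stein sharp maximal function method, which cleanly separates the \emph{local} part of the commutator --- controlled by the John--Nirenberg inequality --- from the \emph{tail}, controlled by the smoothness of the Calder\'on--Zygmund kernel. I will use freely the standard package attached to a Calder\'on--Zygmund operator $T$ with standard kernel: it is of weak type $(1,1)$, bounded on every $L^q$ with $1<q<\infty$, and its kernel $K$ satisfies a H\"older-type bound $|K(y,z)-K(x_0,z)|\ale |y-x_0|^\gamma |x_0-z|^{-n-\gamma}$ for some $\gamma>0$ whenever $|y-x_0|\le\tfrac12|x_0-z|$. Fix exponents $1<r<p$ and $0<\delta<1$, write $\mathcal{M}$ for the Hardy--Littlewood maximal operator, and set $\mathcal{M}_r g:=\big(\mathcal{M}(|g|^r)\big)^{1/r}$ and $\mathcal{M}^{\#}_\delta g:=\big(\mathcal{M}^{\#}(|g|^\delta)\big)^{1/\delta}$, where $\mathcal{M}^{\#}$ is the sharp maximal operator.

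The core of the argument is the pointwise inequality
\[
\mathcal{M}^{\#}_\delta\big([b,T]f\big)(x) \ale \|b\|_{\BMO}\,\big(\mathcal{M}_r(Tf)(x) + \mathcal{M}_r f(x)\big), \qquad x\in\R^n.
\]
To prove it, fix a ball $B=B(x_0,R)\ni x$, let $\lambda:=b_B$ be the average of $b$ over $B$, and use $[b,T]f=(b-\lambda)\,Tf - T\big((b-\lambda)f\big)$ to split
\[
[b,T]f = (b-\lambda)\,Tf \;-\; T\big((b-\lambda)f\chi_{2B}\big) \;-\; T\big((b-\lambda)f\chi_{\R^n\setminus 2B}\big) =: \mathrm{I}+\mathrm{II}+\mathrm{III}.
\]
Subtracting the constant $c_B:=\mathrm{III}(x_0)$, one estimates $\big(\tfrac1{|B|}\int_B |[b,T]f-c_B|^\delta\big)^{1/\delta}$ term by term: for $\mathrm{I}$, H\"older with exponents summing to $1/\delta$ together with John--Nirenberg (which yields $\big(\tfrac1{|B|}\int_B|b-\lambda|^{s}\big)^{1/s}\ale\|b\|_{\BMO}$ for each $s\ge1$) gives a bound by $\|b\|_{\BMO}\,\mathcal{M}_r(Tf)(x_0)$; for $\mathrm{II}$, since $\delta<1$, Kolmogorov's inequality and the weak-$(1,1)$ bound for $T$ dominate the average of $|\mathrm{II}|^\delta$ by $\tfrac1{|B|}\int_{2B}|b-\lambda|\,|f|$, which by H\"older and John--Nirenberg is $\ale\|b\|_{\BMO}\,\mathcal{M}_r f(x_0)$; and for the tail $\mathrm{III}$, writing $\R^n\setminus 2B$ as the union of the dyadic annuli $A_k:=2^{k+1}B\setminus 2^kB$, one uses the kernel estimate and the bound $|b_{2^{k+1}B}-b_B|\ale k\|b\|_{\BMO}$ on each $A_k$ to obtain a convergent series $\ale\sum_{k\ge1}2^{-k\gamma}(k+1)\,\|b\|_{\BMO}\,\mathcal{M}f(x_0)$.

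Once the pointwise bound is in hand, the theorem follows by integration. Assuming first that $f\in L^\infty$ has compact support --- so that $\mathcal{M}_\delta([b,T]f)\in L^p$ and the Fefferman--Stein inequality $\|g\|_{L^p}\le\|\mathcal{M}_\delta g\|_{L^p}\ale\|\mathcal{M}^{\#}_\delta g\|_{L^p}$ is legitimate --- we get
\[
\|[b,T]f\|_{L^p} \ale \|\mathcal{M}^{\#}_\delta([b,T]f)\|_{L^p} \ale \|b\|_{\BMO}\big(\|\mathcal{M}_r(Tf)\|_{L^p}+\|\mathcal{M}_r f\|_{L^p}\big) \ale \|b\|_{\BMO}\,\|f\|_{L^p},
\]
using the $L^p$-boundedness of $\mathcal{M}_r$ (valid since $r<p$) and of $T$; a routine density argument then removes the restriction on $f$.

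I expect the tail term $\mathrm{III}$ to be the main obstacle: it is where one must couple the kernel regularity with the \emph{logarithmic} growth $|b_{2^kB}-b_B|\ale k\|b\|_{\BMO}$ of $\BMO$ averages along dilated balls and verify that the geometric series, burdened with the extra linear factor $k$, still sums --- precisely the step that exploits $b\in\BMO$ rather than merely $b\in L^\infty$. As an alternative one can bypass the sharp maximal function entirely: for real-valued $b$, consider the analytic family $T_z f:=e^{zb}\,T(e^{-zb}f)$, observe that $[b,T]f=\frac{d}{dz}T_zf\big|_{z=0}$, and prove $\|T_zf\|_{L^p}\ale\|f\|_{L^p}$ uniformly for $|z|\le c/\|b\|_{\BMO}$ by noting that $e^{p\,\mathrm{Re}(z)\,b}$ is a Muckenhoupt $A_p$ weight (again a consequence of John--Nirenberg) and invoking the weighted $L^p$ theory for Calder\'on--Zygmund operators; Cauchy's estimate over a circle of radius $\sim 1/\|b\|_{\BMO}$ then yields the bound with the stated dependence on $\|b\|_{\BMO}$.
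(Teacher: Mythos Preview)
The paper does not prove this theorem at all: Theorem~\ref{th:CRW-commutator} is stated in the preliminaries section and attributed to Coifman, Rochberg, and Weiss via the citation \cite{CRW}, with no proof given. So there is no ``paper's own proof'' to compare against.

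Your argument is correct and is in fact one of the standard modern proofs of this result. The sharp maximal function approach you outline (splitting into the local pieces $\mathrm{I}$, $\mathrm{II}$ and the tail $\mathrm{III}$, handling the first two via John--Nirenberg and Kolmogorov, and summing the tail using kernel regularity together with the logarithmic growth of BMO averages) is the route taken, for instance, in the textbook treatments of P\'erez and of Grafakos. One cosmetic point: in your bounds for $\mathrm{I}$, $\mathrm{II}$, $\mathrm{III}$ you write $\mathcal{M}_r(Tf)(x_0)$ and $\mathcal{M}_r f(x_0)$ at the \emph{center} $x_0$ of $B$, whereas the pointwise inequality you are aiming for is at $x\in B$; this is harmless since $B\subset B(x,2R)$, so the relevant averages are dominated by the maximal functions at $x$ up to a dimensional constant, but it is worth stating.

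The alternative you sketch at the end---the complex-analytic route via the family $T_z f=e^{zb}T(e^{-zb}f)$, John--Nirenberg to place $e^{p\,\mathrm{Re}(z)\,b}$ in $A_p$, and Cauchy's estimate---is precisely the original argument of Coifman--Rochberg--Weiss in \cite{CRW}. Both approaches are valid; the sharp maximal function proof has the advantage of giving weighted estimates and more precise constants essentially for free, while the original CRW argument is shorter if one already has the weighted Calder\'on--Zygmund theory available.
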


We also employ estimates of the $\BMO$ norms for the cut-off of a function $u$ in terms of the Morrey space norms of $\nabla u$. For this, we need the following.

\begin{lemma}
\label{lem:BMO-est-NEW}
Let $u$ be a function defined on the ball $B(0,\rho)$, and let $u_{B(0,\rho)}$ be its average. Choose a cut-off function $\zeta \in C_c^\infty(B(0,\rho))$ satisfying $|\nabla \zeta| \le c_0/\rho$ and define 
$$
\tilde{u} := \zeta \cdot (u-u_{B(0,\rho)}).
$$
Then 
$$
\| \nabla \tilde{u} \|_{L^{p,n-p}(\R^n)} \ale \| \nabla u \|_{L^{p,n-p}(B(0,\rho))} 
\quad \text{for all} \quad \frac n2 \le p \le n,
$$
with a constant that depends only on $n$ and on $c_0$.
\end{lemma}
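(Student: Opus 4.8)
The plan is to reduce the Morrey estimate for $\nabla\tilde u$ on $\R^n$ to a Morrey estimate for $\nabla u$ on the original ball $B(0,\rho)$, using the product rule and a Poincar\'e-type inequality to absorb the term where the derivative falls on the cut-off $\zeta$. First, by the product rule, $\nabla\tilde u = \zeta\,\nabla u + (u-u_{B(0,\rho)})\,\nabla\zeta$, so it suffices to bound the $L^{p,n-p}$ norms of the two summands separately. The first summand is supported in $B(0,\rho)$ and satisfies $|\zeta\,\nabla u|\le|\nabla u|$ pointwise, so its Morrey norm over all balls $B(z,r)$ in $\R^n$ is controlled by $\|\nabla u\|_{L^{p,n-p}(B(0,\rho))}$ — here one only has to note that $B(z,r)\cap B(0,\rho)$ is either contained in a ball of radius $\min(r,\rho)$ centered in $\ov{B(0,\rho)}$ or disjoint from the support, which changes the constant by at most a dimensional factor. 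The second summand is the genuine work: it is supported in $B(0,\rho)$ and bounded pointwise by $(c_0/\rho)\,|u-u_{B(0,\rho)}|$.

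For the second term, fix a ball $B(z,r)$. If $r\ge c\rho$ for a fixed dimensional $c$, then $\frac1{r^{n-p}}\int_{B(z,r)}|(u-u_{B(0,\rho)})\nabla\zeta|^p\,dx \ale \rho^{p-n}\int_{B(0,\rho)}|u-u_{B(0,\rho)}|^p\,dx$, and by the Poincar\'e inequality on $B(0,\rho)$ (valid for $p\ge 1$) this is $\ale \rho^{p-n}\cdot\rho^{\,p}\,\dashint_{B(0,\rho)}|\nabla u|^p \le \|\nabla u\|_{L^{p,n-p}(B(0,\rho))}^p$, taking $z=0$, $r=\rho$ in the definition of the Morrey norm. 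If $r<c\rho$, I want instead a local Poincar\'e inequality on $B(z,r)\cap B(0,\rho)$ against a \emph{local} average; but since we are subtracting the global average $u_{B(0,\rho)}$, not the local one, I need to telescope: writing $u-u_{B(0,\rho)} = (u-u_{B(z,r)\cap B(0,\rho)}) + (u_{B(z,r)\cap B(0,\rho)}-u_{B(0,\rho)})$, the first piece is handled by Poincar\'e on the small (John) domain $B(z,r)\cap B(0,\rho)$, giving a factor $r^p$, and the difference of averages is controlled by a standard chaining argument along a sequence of balls of radii comparable to $r,2r,\dots,\rho$, each step costing $\dashint|\nabla u|$; summing the geometric-type series and using $\|\nabla u\|_{L^{1,n-p/(p-1)\cdot\ldots}}$ — more precisely, using H\"older to pass from $L^1$ averages to $L^p$ averages and then the Morrey bound on each annular ball — yields $|u_{B(z,r)\cap B(0,\rho)}-u_{B(0,\rho)}| \ale \rho^{1-n/p}\,\|\nabla u\|_{L^{p,n-p}(B(0,\rho))}$ up to logarithmic factors in $\rho/r$. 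Multiplying by $(c_0/\rho)^p\cdot r^{-(n-p)}\cdot|B(z,r)|$ and noting $r<c\rho$, the $\rho$ powers balance exactly (this is where the homogeneity $s=n-p$ is used) and the log is killed by the $(r/\rho)^{p}$ gain, so the contribution is again $\ale\|\nabla u\|_{L^{p,n-p}(B(0,\rho))}^p$.

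The constraint $\tfrac n2\le p\le n$ enters only to guarantee $p\ge 1$ (so Poincar\'e and H\"older are available) and $s=n-p\ge 0$ (so the Morrey space is meaningfully defined and the scaling in the chaining argument is non-degenerate); the argument in fact works for any $1\le p\le n$. I expect the main obstacle to be the small-radius case $r\ll\rho$: one must carefully organize the chaining between the local average on $B(z,r)\cap B(0,\rho)$ and the global average $u_{B(0,\rho)}$ so that the unavoidable logarithm $\log(\rho/r)$ is absorbed by the polynomial gain coming from the small radius, and one must keep track of the fact that $B(z,r)\cap B(0,\rho)$ can be a thin lens-shaped region near $\partial B(0,\rho)$ — this is still a John domain with uniform constants, so the Poincar\'e inequality holds with a dimensional constant, but this point deserves an explicit remark. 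Everything else is bookkeeping with the product rule and the definition of the Morrey norm.
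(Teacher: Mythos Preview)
Your decomposition via the product rule matches the paper's, and the first summand is handled identically. For the second summand, however, the paper takes a much shorter route that exploits the hypothesis $p\ge n/2$ in an essential way: on any ball $B(z,r)$, one simply applies H\"older to pass from the weighted $L^p$ integral to an unweighted $L^n$ integral,
\[
\Bigl(r^{p-n}\!\!\int_{B(z,r)\cap B(0,\rho)}\!|u-u_{B(0,\rho)}|^p\Bigr)^{1/p}
\ \ale\ \Bigl(\int_{B(0,\rho)}|u-u_{B(0,\rho)}|^n\Bigr)^{1/n},
\]
and then invokes the Poincar\'e--Sobolev inequality on $B(0,\rho)$ with exponents $p\to n$ (legitimate precisely because $p\ge n/2$ forces $p^*\ge n$). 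This gives the bound in two lines, with no case splitting, no chaining, and no boundary geometry to worry about.

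Your chaining argument is sound in outline and does have the advantage of working for the full range $1\le p\le n$, but one intermediate formula is off: the telescoping bound should read $|u_{B(z,r)\cap B(0,\rho)}-u_{B(0,\rho)}|\ale \log(\rho/r)\,\|\nabla u\|_{L^{p,n-p}(B(0,\rho))}$, not $\rho^{1-n/p}\,\|\nabla u\|_{L^{p,n-p}}$ (the latter is dimensionally inconsistent, since the Morrey norm here is scale-invariant). With the correct bound, the contribution is $(r/\rho)^p(\log(\rho/r))^p\,\|\nabla u\|_{L^{p,n-p}}^p$, which is indeed bounded --- so your conclusion that the log is absorbed by the polynomial gain is right, just via a slightly different bookkeeping than you wrote. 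The thin-lens issue you flag is real but harmless once you replace $B(z,r)$ by a ball of comparable radius centered at a point of $B(0,\rho)$ before chaining.
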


\begin{proof}
Choose $\frac n2 \le p \le n$ and assume $\nabla u \in L^{p,n-p}(B(0,\rho))$. By triangle inequality and the assumption $|\nabla \zeta| \ale 1/\rho$ we have
\begin{align*}
\| \nabla \tilde{u} \|_{L^{p,n-p}(\R^n)} 
& \le \| \zeta \cdot \nabla u \|_{L^{p,n-p}(\R^n)}  + \| \nabla \zeta \cdot (u-u_{B(0,\rho)}) \|_{L^{p,n-p}(\R^n)} \\
& \ale \| \nabla u \|_{L^{p,n-p}(B(0,\rho))}  + \rho^{-1} \| u-u_{B(0,\rho)} \|_{L^{p,n-p}(B(0,\rho))},
\end{align*}
and thus we are left with bounding the second term. 

\smallskip

Let us fix a ball $B(z,r) \subset \R^n$. Applying H\"older's inequality and enlarging the domain of integration, we observe
\begin{align*}
\left( r^{p-n} \int_{B(z,r) \cap B(0,\rho)} |u-u_{B(0,\rho)}|^p \dd x \right)^{1/p}
& \ale \left( \int_{B(z,r) \cap B(0,\rho)} |u-u_{B(0,\rho)}|^n \dd x \right)^{1/n} \\
& \le \left( \int_{B(0,\rho)} |u-u_{B(0,\rho)}|^n \dd x \right)^{1/n}. 
\end{align*}
Thanks to the assumption $p \ge n/2$ we have $p^* \ge n$, which means that the Poincar\'{e}--Sobolev inequality is applicable with exponents $n$ and $p$. Finally, we obtain
\[
\left( \int_{B(0,\rho)} |u-u_{B(0,\rho)}|^n \dd x \right)^{1/n} 
\ale \rho \left( \rho^{p-n} \int_{B(0,\rho)} |\nabla u|^p \dd x \right)^{1/p} 
\ale \rho \| \nabla u \|_{L^{p,n-p}(B(0,\rho))},
\]
as required. 
\end{proof}

An application of Poincar\'e's inequality implies the desired $\BMO$ estimate: 

\begin{corollary}
\label{cor:BMO-est}
Under the notation of Lemma \ref{lem:BMO-est-NEW}, 
$$
\| \tilde{u} \|_{\BMO(\R^n)} \ale \| \nabla u \|_{L^{p,n-p}(B(0,\rho))}.
$$
In particular, this estimates applies for the space $L^n = L^{n,0}$. 
\end{corollary}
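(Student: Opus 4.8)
\emph{Proof proposal.} The plan is to bound the mean oscillation of $\tilde u$ over an arbitrary ball directly, using the plain Poincar\'e inequality together with the Morrey bound already supplied by Lemma~\ref{lem:BMO-est-NEW}. First I would record that $\tilde u = \zeta\cdot(u-u_{B(0,\rho)})$ is compactly supported and lies in $W^{1,p}(\R^n)$ for every $p\le n$ — indeed $u\in W^{n/2,2}(\B^n)$ embeds into $W^{1,n}$ locally and $\zeta$ is smooth — so the Poincar\'e inequality is applicable on every ball, and the range $\frac n2\le p\le n$ of the Lemma is the relevant one.

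Next, fix an arbitrary ball $B=B(z,r)\subset\R^n$. By the $L^1$-Poincar\'e inequality, followed by H\"older's (Jensen's) inequality to pass to the $L^p$ average of the gradient (legitimate since $p\ge 1$), one gets
\[
\frac{1}{|B|}\int_B |\tilde u - \tilde u_B|\,dx
\;\ale\; r\left(\frac{1}{|B|}\int_B |\nabla\tilde u|^p\,dx\right)^{1/p}
\;\ale\; \left(r^{p-n}\int_B |\nabla\tilde u|^p\,dx\right)^{1/p}.
\]
The point to check here is purely bookkeeping of exponents: the factor $r^{p-n}$ produced by Poincar\'e is exactly the weight $r^{-s}$ in the Morrey norm $L^{p,s}$ with $s=n-p$, so the right-hand side is bounded by $\|\nabla\tilde u\|_{L^{p,n-p}(\R^n)}$. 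Taking the supremum over all balls $B$ yields $\|\tilde u\|_{\BMO(\R^n)}\ale \|\nabla\tilde u\|_{L^{p,n-p}(\R^n)}$.

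Finally, I would invoke Lemma~\ref{lem:BMO-est-NEW} to replace $\|\nabla\tilde u\|_{L^{p,n-p}(\R^n)}$ by $\|\nabla u\|_{L^{p,n-p}(B(0,\rho))}$, which gives the claim; the special case $p=n$ is just the identification $L^{n,0}=L^n$, recovering the classical fact $\|\tilde u\|_{\BMO}\ale\|\nabla u\|_{L^n(B(0,\rho))}$. There is no real obstacle in this argument — the only things to be careful about are the exponent matching in the display above and the (routine) verification that $\tilde u$ has enough Sobolev regularity for Poincar\'e to apply on every ball.
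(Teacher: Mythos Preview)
Your argument is correct and matches the paper's intended proof exactly: the paper simply says ``An application of Poincar\'e's inequality implies the desired $\BMO$ estimate,'' and you have filled in precisely those details --- Poincar\'e on an arbitrary ball gives $\dashint_B|\tilde u-\tilde u_B|\ale (r^{p-n}\int_B|\nabla\tilde u|^p)^{1/p}\le\|\nabla\tilde u\|_{L^{p,n-p}(\R^n)}$, and Lemma~\ref{lem:BMO-est-NEW} finishes.
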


Finally, we need a variant of Rivi\'{e}re--Uhlenbeck’s decomposition for $\Omega$ of class $L^n$, with Morrey--Sobolev estimates. The following result is essentially contained in \cite{Goldstein-Zatorska-2018}. 

\begin{theorem}
\label{th:uhlenbeck}
Assume $1 < p < n/2$. Let $\Omega \colon \B^n \to \R^{d \times d} \otimes \R^n$ be an antisymmetric matrix of vector fields on $\B^n$. 
There exists $\eps_0(n,d) > 0$ such that if $\Omega$ satisfies the smallness condition 
\[
\| \Omega \|_{L^{n}} < \eps_0,
\]
then there is a matrix-valued function $P \in W^{1,n}(\B^n,\operatorname{SO}(d))$ satisfying $P|_{\pl\B^n} = \operatorname{Id}$ in the sense of trace, for which 
\begin{equation}
\label{eq:Uhlenbeck}
\wOmega := P^\top \nabla P + P^\top \Omega P
\quad \text{is divergence-free in } \B^n.
\end{equation}
Moreover, $\nabla P \in L^n$ with 
\begin{align*}
\| \nabla P \|_{L^n}
& \le C(n,d) \| \Omega \|_{L^n}, \\
\| \nabla P \|_{L^{p,n-p}} 
& \le C(n,d) \| \Omega \|_{L^{2p,n-2p}}.
\end{align*}
\end{theorem}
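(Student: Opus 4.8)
The plan is to follow Rivière's original construction of the Coulomb gauge, upgraded to the critical exponent $L^n$ by Goldstein--Zatorska-Bartkiewicz (and ultimately tracing back to Uhlenbeck's gauge-fixing lemma), and then to run a separate linear-elliptic argument to extract the Morrey estimate on $\nabla P$. I would begin by recalling the existence part: under the smallness condition $\|\Omega\|_{L^n} < \eps_0$, a continuity/fixed-point (or direct minimisation of a suitable Coulomb energy $\int |P^\top\nabla P + P^\top\Omega P|^2$, appropriately regularised, over $P \in W^{1,n}(\B^n,\SO(d))$ with $P|_{\pl\B^n}=\Id$) produces $P$ with $\wOmega$ divergence-free and the linear bound $\|\nabla P\|_{L^n} \le C\|\Omega\|_{L^n}$. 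This is exactly the statement in \cite{Goldstein-Zatorska-2018}, so I would cite it rather than reprove it; the only thing to check is that the formulation there is stated (or trivially implies the statement) on the unit ball with the identity boundary trace, which it is after a standard scaling/extension.

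The substantive new point is the Morrey estimate $\|\nabla P\|_{L^{p,n-p}} \le C\|\Omega\|_{L^{2p,n-2p}}$ for $1<p<n/2$. Here I would argue as follows. The divergence-free condition \eqref{eq:Uhlenbeck} can be rewritten, using $P^\top \nabla P = -(\nabla P^\top) P$ and antisymmetry of $\Omega$, as an elliptic system of the schematic form $\dv(P^\top \nabla P) = -\dv(P^\top \Omega P)$, i.e. $\Delta$-type control of $P$ with right-hand side in divergence form built from products of $P$ (which is bounded, being $\SO(d)$-valued), $\nabla P$, and $\Omega$. More precisely one writes $\nabla P = P\,\wOmega - \Omega P$ and feeds this into the div-curl system: $\dv \nabla P$ and $\operatorname{curl}\nabla P = 0$ are both expressed through $P$, $\nabla P$, $\wOmega$, $\nabla\wOmega$ (with $\dv\wOmega=0$). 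Applying Morrey-space estimates for the Hodge/div-curl system (or equivalently $L^{p,\lambda}$-boundedness of the relevant Calderón--Zygmund operators, which holds on the full Morrey scale), one gets
\[
\|\nabla P\|_{L^{p,n-p}} \ale \|\Omega\,P\|_{L^{p,n-p}} + \|P\,\wOmega\|_{L^{p,n-p}} \ale \|\Omega\|_{L^{p,n-p}} + \|\wOmega\|_{L^{p,n-p}},
\]
using $\|P\|_\infty \le C(d)$. It then remains to control $\|\wOmega\|_{L^{p,n-p}}$; since $\wOmega = P^\top\nabla P + P^\top\Omega P$, this is again $\ale \|\nabla P\|_{L^{p,n-p}} + \|\Omega\|_{L^{p,n-p}}$, so a naive iteration is circular. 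The fix is to absorb: exploit the smallness of $\|\Omega\|_{L^n}$ together with the localisation inherent in Morrey norms. On a ball $B(x_0,r)$ one estimates $\|\nabla P\|_{L^p(B(x_0,r))}$ by the div-curl system with a cut-off; the cut-off commutator terms involve $\|\nabla P\|_{L^p}$ on a slightly larger ball times $r^{-1}$, controllable by Poincaré since $P$ is bounded, and the genuinely quadratic term $\Omega\cdot\nabla P$ is handled by Hölder, $\|\Omega\nabla P\|_{L^1} \le \|\Omega\|_{L^n}\|\nabla P\|_{L^{n'}}$ — here one uses the good embedding $L^{2p,n-2p}\hookrightarrow$ (Morrey bound on $\nabla P$ in $L^{p,n-p}$ absorbs with small constant $\eps_0$). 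Carrying the scaling bookkeeping carefully, the factor $r$ converts an $L^{2p,n-2p}$ bound on $\Omega$ into an $L^{p,n-p}$ bound on $\nabla P$, which is exactly why the exponent on $\Omega$ doubles.

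The main obstacle I anticipate is precisely this absorption/bootstrap step: one must set up the localised div-curl estimate so that (i) the quadratic term $\|\Omega \cdot \nabla P\|$ is absorbed into the left-hand side using only the global smallness $\|\Omega\|_{L^n}<\eps_0$ (not smallness of the Morrey norm, which is not assumed), and (ii) the correct power of $r$ emerges to upgrade $L^{2p,n-2p}$ data to an $L^{p,n-p}$ conclusion. This is a standard but delicate Morrey-space hole-filling argument (in the spirit of \cite{Goldstein-Zatorska-2018}, and analogous to how one gets Morrey estimates in Rivière--Uhlenbeck theory); once the iteration inequality $\|\nabla P\|_{L^{p,n-p}(B_r)} \le \theta\,\|\nabla P\|_{L^{p,n-p}(B_{2r})} + C\|\Omega\|_{L^{2p,n-2p}}$ with $\theta<1$ is in hand, a routine iteration lemma closes the proof. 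I would also double-check that the constraint $1<p<n/2$ is what makes $2p<n$ so that $L^{2p,n-2p}$ is a meaningful Morrey space containing $\Omega$ under the hypotheses of the applications, and that all Calderón--Zygmund operators used are bounded on $L^{p,n-p}$ for $p$ in this range, which they are (Morrey boundedness of CZOs holds for $1<p<\infty$, $0<\lambda<n$).
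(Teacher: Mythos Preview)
There is a genuine gap in how you handle the existence and $L^n$-estimate part. You plan to cite \cite{Goldstein-Zatorska-2018} directly for $\Omega\in L^n$, asserting that ``this is exactly the statement'' there. But the paper stresses precisely the opposite: the results \cite[Thm.~1.2,~1.3]{Goldstein-Zatorska-2018} are stated for subcritical Morrey--Sobolev data and do \emph{not} cover the borderline case $\Omega\in L^n$. Bridging that gap is the entire content of the paper's argument. One mollifies $\Omega_j:=\Omega\ast\vp_{1/j}$, applies \cite[Thm.~1.2]{Goldstein-Zatorska-2018} to the smooth $\Omega_j$ to obtain $P_j$ with the uniform bound $\|\nabla P_j\|_{L^n}\ale\|\Omega_j\|_{L^n}\le\|\Omega\|_{L^n}$, and extracts a weak $L^n$-limit $P$; the divergence-free condition on $\wOmega$ and the $L^n$-estimate survive in the limit.

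For the Morrey estimate the paper again proceeds by approximation rather than by your direct route: it invokes the regularity lemma \cite[Lem.~5.4]{Goldstein-Zatorska-2018} on the pairs $(\Omega_j,P_j)$ --- that lemma also carries a qualitative higher-regularity hypothesis which $\Omega_j$ meets but $\Omega$ does not --- to obtain $\|\nabla P_j\|_{L^{p,n-p}}\ale\|\Omega_j\|_{L^{2p,n-2p}}$, and then passes to the limit (weak lower semicontinuity on the left; Minkowski's inequality for convolutions on the right, yielding $\|\Omega_j\|_{L^{2p,n-2p}}\le\|\Omega\|_{L^{2p,n-2p}}$). Your localised div--curl absorption sketch is essentially a reproof of \cite[Lem.~5.4]{Goldstein-Zatorska-2018} itself; the heuristic for the exponent doubling (the quadratic terms $\nabla P\cdot\nabla P$ and $\Omega\cdot\nabla P$ force a H\"older pairing in $L^{2p,n-2p}$) is correct, but carrying it out rigorously for $\Omega$ merely in $L^n$ runs into the same qualitative regularity issue you overlooked in the existence step, and is in any case far more work than the paper's approximate-and-cite strategy.
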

The first difference between the above and \cite[Thm.~1.2,1.3]{Goldstein-Zatorska-2018} is the use of vector fields versus differential forms; we choose the former just for convenience. Under the isomorphism provided by the euclidean metric, $\Omega$ and $\wOmega = P^\top dP + P^\top \Omega P$ become matrices of $1$-forms, and then the divergence-free condition on $\wOmega$ is equivalent to $\wOmega = *d\xi$ for some matrix of $(n-2)$-forms $\xi$, which leads to the formulation in \cite{Goldstein-Zatorska-2018}. 

The second, more important issue is that the actual statements in \cite[Thm.~1.2,1.3]{Goldstein-Zatorska-2018} do not cover the case of $\Omega \in L^n$. For this reason, we provide a brief sketch which bridges the gap. 

\begin{proof}[Sketch of proof of Theorem \ref{th:uhlenbeck}]
For convenience, let us consider $\Omega$ as a function on the whole of $\R^n$, simply by considering its zero extension. Using a smooth convolution kernel $\vp_\eps$, we construct smooth approximations $\Omega_j := \Omega * \vp_{1/j}$. 

For these, we can apply \cite[Thm.~1.2]{Goldstein-Zatorska-2018}, obtaining $P_j,\wOmega_j$ with regularity even higher than required here. The main point is that the $W^{1,n}$-regularity of $P_j$ is uniform, as 
\[
\| \nabla P_j \|_{L^n} 
\ale \| \Omega_j \|_{L^n} 
\le \| \Omega \|_{L^n}.
\]
We may choose a subsequence for which $\nabla P_j \to \nabla P$ weakly in $L^n$, $P_j \to P$ strongly in any $L^q$ and pointwise a.e., in particular ensuring that $P$ is $\operatorname{SO}(d)$-valued. It is now easy to see that 
\[
\wOmega_j = P_j^\top \nabla P_j + P_j^\top \Omega P_j
\to P^\top \nabla P + P^\top \Omega P
=: \wOmega
\]
weakly in $L^1$, and it follows that $\wOmega$ is divergence-free. Since $\| \nabla P \|_{L^n} \le \liminf \| \nabla P_j \|_{L^n}$ by weak convergence, we obtain the desired $L^n$-estimates. 

\medskip

It remains to show the Morrey norm estimates; this will be done via the regularity lemma \cite[Lem.~5.4]{Goldstein-Zatorska-2018}. However, this lemma requires higher regularity -- which again is not quantitatively used in our context -- therefore we apply it to $\Omega_j$, not $\Omega$ directly. Note that $\| f \|_{L^{2p,n-2p}} \ale \| f \|_{L^n}$ for any function $f$, and hence the $L^{2p,n-2p}$-smallness assumptions on  $\Omega_j$ and $\nabla P_j$ in \cite[Lem.~5.4]{Goldstein-Zatorska-2018} are satisfied. Applying this lemma, we infer 
\begin{equation}
\label{eq:Morrey-for-j}    
\| \nabla P_j \|_{L^{p,n-p}} \ale \| \Omega_j \|_{L^{2p,n-2p}},
\end{equation}
which implies an analogous estimate for $P$ and $\Omega$. To see this, first note that the bound $\| \nabla P \|_{L^{p,n-p}} 
\le \liminf \| \nabla P_j \|_{L^{p,n-p}}$ follows simply by weak $L^p$ convergence $\nabla P_j \to \nabla P$. As for $\Omega$, for a fixed $r>0$, on each ball $B(x,r)$ Minkowski inequality for integrals gives
\begin{align*}
\| \Omega_j \|_{L^p(B(x,r))}
 & = \left\| \int \Omega(\cdot-y) \vp_{1/j}(y) \dd y\,  \right\|_{L^p(B(x,r))} \\
 & \le \int \| \Omega(\cdot-y) \|_{L^p(B(x,r))} \vp_{1/j}(y) \dd y \le \sup_{y} \| \Omega \|_{L^p(B(y,r))},
\end{align*}
so $\| \Omega_j \|_{L^{2p,n-2p}} \le \| \Omega \|_{L^{2p,n-2p}}$. Combining \eqref{eq:Morrey-for-j} with these two comparisons, we have 
\[
\| \nabla P \|_{L^{p,n-p}} \ale \| \Omega \|_{L^{2p,n-2p}},
\] 
as required.
\end{proof}
\section{Regularity for $H$-systems}
\label{ch:H-systems}

In this Section, we present the proof of Theorem~\ref{Hsystems-regularity}. Here is the plan of the argument. 

Because the right-hand side of \eqref{eq1} is merely integrable and $W^{1,n}$ does not embed into $L^\infty$ (nor does $W^{n/2,2}$), the standard approach of using $u$ (or its cut-off version $\tu$) as a test function in \eqref{eq2} fails. For this reason, in Section~\ref{sec:testmap},  we employ the Hodge decomposition
\[
|\nabla \tilde{u}|^{-\varepsilon}\nabla \tilde{u} 
= \nabla \phi + V\, ,
\]
for a small $\eps>0$, and use $\phi$ (or rather its cut-off version $\psi$) as a test map. The divergence-free term $V$ is small due to the stability theorem (Theorem \ref{th.Hodge1}). Moreover, since
$(n-\eps)/(1-\eps)>n$, by Sobolev imbedding $\phi$ is bounded and we have
\begin{equation*}
\text{osc}\, \phi \ale r^{(n-1)\eps/(n-\eps)} \biggl(\int_{B_{3r}}|\nabla u|^{n-\eps}\biggr)^{\frac{1-\eps}{n-\eps}};
\end{equation*}
this is crucial for the estimates of the right-hand side. In Section~\ref{sec:LHS-H}, we deal with the left hand side of \eqref{eq2} and use Theorem~\ref{th.Hodge1} to check that
\begin{equation}
\label{lhs-sketch}
\int |\nabla u|^{n-2}\nabla u \cdot \nabla \psi\, dx \ge \int_{B_r} |\nabla  u|^{n-\eps}\, dx + \mbox{small and lower order terms.}
\end{equation}
Then, in Section~\ref{H-rhs} which forms the core of the whole proof, we check that the right hand side satisfies
\begin{equation}
\label{rhs-sketch}
\biggl| \int H(u)\, Ju \cdot \psi\, dx \biggr| \le \mbox{a small multiple of } r^\eps\, \|\nabla u\|_{L^{n-\eps,\eps}(B_{3r})}^{n-\eps}.
\end{equation}
To obtain this estimate, we first note that replacing $H(u)$ by its average would give us the term $H(u)_{B_{3r}}\, Ju$ which has Jacobian structure and hence can be handled via Hardy space estimates, enabling us to exploit $\BMO$ bounds on $\psi$ (instead of $L^\infty$ bounds) by $\cH^1-\BMO$ duality.  The 'bad' part of the right-side, as alluded to in the Introduction, measures how much $(H(u)-H(u)_{B_{3r}})\, Ju$ differs from a Jacobian. To estimate it, we employ the Coifman--Rochberg--Weiss commutator theorem \emph{and} the assumption $u\in W^{n/2,2}$, cf. Lemma~\ref{use-wn22} and its application in the proof.

Finally, in Section~\ref{sec:hole}, we combine \eqref{lhs-sketch}--\eqref{rhs-sketch} and fix the value of $\eps=\eps(n)>0$ needed to perform the standard hole filling trick. This yields the existence of an $s>0$ such that
\[
\|\nabla u\|_{L^{n-\eps,\eps}(B_{r})} \ale r^s \qquad\mbox{as $r\to 0$,}
\]
which is enough to conclude the proof.

\subsection{The test map}\label{sec:testmap} 
By a density argument, \eqref{eq2} holds for all $\psi$ of class $W_0^{1,n} \cap L^\infty$. Our test map is defined via the Hodge decomposition of $|\nabla \tilde{u}|^{-\varepsilon}\nabla \tilde{u}^{k}$, where $\tilde u$ is a cut-off solution and $\eps >0$ is sufficiently small.

Fix a standard cutoff function $\zeta \in C_{c}^{\infty}(B(a,3r))$ such that $\zeta(x) = 1$ for $x\in B(a,2r)$, $\zeta(x)=0$ for $x\in\R^n\setminus B(a,3r)$ and
\[
| \nabla^j \zeta(x)| \ale r^{-j}, \qquad j=1,2,\ldots, \frac n2\, , \quad x\in B(a,3r). 
\]
Then introduce the cut-off solution 
\[
\tilde{u}(x):=\zeta(x)(u(x)-u_{B(a,3r)}), 
\]

Pick a small $\eps>0$ (to be fixed later on) and define 
\[ 
G^{k}:=|\nabla \tilde{u}|^{-\varepsilon}\nabla \tilde{u}^{k},\qquad k=1,...,n+1. 
\]
Note that $G\in L^{q}(\R^{n})$ for $1\leq q \leq \frac{n}{1-\varepsilon}$. Indeed, 
\begin{equation}
\label{eq:GinLq}
\|G\|_{q}^{q} = \int_{B(a,3r)}|\nabla \tilde{u}|^{(1-\varepsilon)q}\, dx 
\ale \int_{B(a,3r)}|\nabla u|^{(1-\varepsilon)q}\, dx\
\end{equation}
by Lemma \ref{lem:BMO-est-NEW}. 
In what follows we restrict the attention to 
\begin{equation}
\label{restriction-epsq}
0<\varepsilon <\frac{1}{4}, \qquad q = \frac{n-\eps}{1-\varepsilon} > n.
\end{equation}

Use Hodge decomposition to write
\begin{equation}\label{hodge}
G^{k}=\nabla \phi^{k} + V^{k}, \
\end{equation}
where $\phi\in W^{1,q}_{\text{loc}}(\R^{n})$, $V^{k}\in L^{q}(\R^{n},\R^{n})$ has divergence zero, and without loss of generality 
\begin{equation}
\label{mean.phi}
\dashint_{B(a,3r)}\phi^{k}\ dx = 0\, .	
\end{equation}
By Theorem~\ref{th.Hodge1} and \eqref{eq:GinLq}, 
\begin{gather} 
\label{norm.phi.V}
\|\nabla\phi^{k}\|_{L^{q}(\R^{n})}+\|V^{k}\|_{L^{q}(\R^{n})} 
\ale \|G^{k}\|_{L^{q}(\R^n)} 
\ale \biggl( \int_{B(a,3r)}|\nabla u|^{(1-\varepsilon)q}\, dx \biggr)^{1/q}\, ,\\
\label{V.small}
\|V^{k}\|_{L^{q}(\R^{n})} \le C(n)\eps \, \biggl( \int_{B(a,3r)}|\nabla u|^{(1-\varepsilon)q}\, dx \biggr)^{1/q}\, .
\end{gather}
Since $q>n$ in \eqref{norm.phi.V}, $\phi^k$ is in fact bounded and, by a routine computation involving Morrey's imbedding and \eqref{mean.phi},
\begin{equation}
	\label{bound.phi}
	\|\phi\|_{L^\infty(B(a,3r))} \le C(\eps) r^\eps \biggl(\frac{1}{r^\eps}\, \int_{B(a,3r)} |\nabla u|^{n-\eps}\, dx\biggr )^{(1-\eps)/ (n-\eps)}\, .
\end{equation}

\begin{remark} For $\eps,q$ satisfying \eqref{restriction-epsq}, the constant in \eqref{eq:GinLq} and \eqref{norm.phi.V} depends \emph{only on $n$}. This follows from the Riesz--Thorin convexity theorem applied to the singular operators that produce the gradient part and the divergence free part of the Hodge decomposition. However, the constant in \eqref{bound.phi} \emph{does} also depend on $\eps>0$, as there is no imbedding of $W^{1,n}$ into $L^\infty$.
\end{remark}

We test \eqref{eq1} with
\[
\psi^{k}:=\zeta_0 \phi^{k}, \qquad k=1,2,\ldots, n+1,
\]
where $\zeta_0$ is \emph{another} cut-off function: $\zeta_0 \in C_c^\infty(B(a,2r))$ satisfying $\zeta_0 \equiv 1$ on $B(a,r)$. This way, $\nabla u$ and $\nabla \tu$ conveniently coincide on the support of $\psi$.

\subsection{Left hand side estimates}\label{sec:LHS-H} 
Using the definition of $\psi$ and $\phi$, we write

\begin{align*}
\int |\nabla u|^{n-2} \nabla u \nabla \psi 
& = \int |\nabla u|^{n-2} \nabla u (\zeta_0 \nabla \phi + \nabla \zeta_0 \, \phi) \\
& = \int \zeta_0 |\nabla u|^{n-2} \nabla u \cdot |\nabla u|^{-\eps} \nabla u 
- \int \zeta_0 |\nabla u|^{n-2} \nabla u \cdot V \\
& \phantom{=} + \int |\nabla u|^{n-2} \nabla u \cdot \nabla \zeta_0 \, \phi  \\
& = \barroman{I} + \barroman{II} + \barroman{III}.
\end{align*}
Note how we are free to replace $\nabla \tu$ with $\nabla u$, as these two maps agree on the support of $\zeta_0$. 

\medskip

The term $\barroman{I}$ dominates the energy, and in fact 
\[
\barroman{I} 
= \int \zeta_0 |\nabla u|^{n-\eps} 
\ge \int_{B(a,r)} |\nabla u|^{n-\eps}.
\]
The error term $\barroman{II}$ is small due to the stability estimate \eqref{V.small}: 
\begin{equation}
\label{J2-est}
	\begin{split}
	\barroman{II} 
	& \le \int_{B(a,2r)} |\nabla u|^{n-2} |\nabla u| \, |V|\, dx \\
	& \le \left( \int_{B(a,2r)} |\nabla u|^{n-\eps} \right)^{\frac{n-1}{n-\eps}} \left( \int_{B(a,2r)}  |V|^{\frac{n-\eps}{1-\eps}} \right)^{\frac{1-\eps}{n-\eps}} \\
	& \le C(n)\eps \int_{B(a,3r)} |\nabla u|^{n-\eps}\, dx\, .
	\end{split}	
\end{equation}

The term $\barroman{III}$, with $\nabla \zeta\not=0$ only on the annulus $A = B(a,2r) \setminus B(a,r)$, is a lower order boundary term. As $|\nabla\zeta|\ale r^{-1}$, a standard computation employing H\"{o}lder and Poincar\'{e} inequalities yields
\begin{equation*}
\begin{split}
|\barroman{III}|
& \ale r^{-1} \int_{A} |\nabla u|^{n-1}|\phi| \, dx  \\
& \ale r^{-1} 
\biggl(\int_{A}|\nabla u|^{n-\varepsilon}\, dx\biggr)^{(n-1)/(n-\eps)}
\biggl(\int_{B(a,2r)}
 |\phi|^{(n-\eps)/(1-\eps)}\, dx\Big)^{(1-\eps)/(n-\epsilon)} \\
& \ale \biggl(\int_{A}|\nabla u|^{n-\varepsilon}\, dx\biggr)^{(n-1)/(n-\eps)}
\biggl(\int_{B(a,2r)}
 |\nabla \phi|^{(n-\eps)/(1-\eps)}\, dx\Big)^{(1-\eps)/(n-\epsilon)}. 
\end{split}
\end{equation*}
Invoking now \eqref{norm.phi.V} for $q  = \frac{n-\eps}{1-\eps}$ and Young's inequality, we obtain
\begin{equation}
\label{J1-est}
|\barroman{III}| \le C(n) \int_{A}|\nabla u|^{n-\varepsilon}\, dx + \frac{1}{10} \int_{B(a,3r)}|\nabla u|^{n-\varepsilon}\, dx
\end{equation}

\medskip

Assuming from now on that $\eps>0$ is small so that 
\begin{equation}
\label{eps.1}
	C(n)\eps< \frac{1}{10} \qquad\mbox{in \eqref{J2-est},}
\end{equation} 
and gathering the estimates of $\barroman{I},\barroman{II},\barroman{III}$, we finally arrive at
\begin{equation}
\label{final.LHS}
\begin{split}
\int |\nabla u|^{n-2}\nabla u\nabla\psi\,  dx 	& 
\ge \int_{B(a,r)} |\nabla u|^{n-\eps}\, dx 	\\
& \quad{} - C(n) \int_{A}|\nabla u|^{n-\varepsilon}\, dx - \frac{1}{5} \int_{B(a,3r)}|\nabla u|^{n-\varepsilon}\, dx\, .
\end{split}
\end{equation}

\subsection{Right hand side estimates}
\label{H-rhs}
Here, we stick to the language of differential forms. The $i$-th  coordinate $J(u)^i$ of  $Ju = \frac{\partial u}{\partial x_{1}}\times\ldots \times\frac{\partial u}{\partial x_{n}}$ 
satisfies
\[
	J(u)^{i}dx_{1}\wedge\ldots \wedge dx_{n} 
	= u^{\ast}(\star dx^{i})
	= du^{1}\wedge \ldots \wedge du^{i-1}
	    \wedge du^{i+1}\wedge \ldots \wedge du^{n+1}\, ,
\] 
where $\star$ is the Hodge star. Thus, the right hand side of \eqref{eq2} is equal to the sum of 
\begin{equation}
\label{def.ti}
t_i=\int_{\B^{n}} H(u) \psi^i 
		du^{1}\wedge\ldots \wedge du^{i-1}
		\wedge du^{i+1}\wedge\ldots\wedge du^{n+1}\, .	
\end{equation}
We show how to deal with the term $t=t_{n+1}$; all the other ones are handled in the same way.

As we have already mentioned, the main idea of the proof is to split the right side into two kinds of terms. The terms of the first kind are harmless: they behave like if $H(u)$ were \textbf{a constant}. The remaining bad error terms are handled using the Coifman--Rochberg--Weiss commutator theorem and Gagliardo--Nirenberg inequalities in a borderline case; here the assumption $u\in W^{n/2, 2}$ enters in a decisive, crucial way. Here are the details.

To keep $u$ unchanged on the support of $\psi=\zeta\phi$, i.e. on $B(a,2r)$, and localize it in a~comparable scale, we have already introduced the cut-off function $\zeta$ with $\zeta \equiv 1 $ on $B(a,2r)$ and $\zeta \equiv 0$ off $B(a,3r)$. For brevity, write
\begin{equation}
\label{Hv}
\tu = \zeta (u - u_{B(a,3r)}), \qquad 
b = \zeta_0 (H(u)-H(u)_{B(a,2r)}), \qquad 
H_0 = H(u)_{B(a,2r)}.
\end{equation}
The right hand side term $t=t_{n+1}$ in \eqref{def.ti} becomes then
\begin{equation}
\label{def.R1R2}
\begin{split}
t &= \int_{\B^{n}}  
	 \phi^{n+1} b\,  d\tu^{1}\wedge\ldots \wedge d\tu^{n}
  + H_0 \int_{\B^{n}} \psi^{n+1} d\tu^1\wedge \ldots \wedge d\tu^n\, \\
  & = R_1 +R_2\, . 	
\end{split}
\end{equation}

\subsubsection{The harmless term $R_2$.}
Since $\tu$ is compactly supported, after one integration by parts (to apply $d$ to $\psi^{n+1}$) we use Theorem~\ref{our.clms} to obtain
\begin{equation}
\label{R2-est}
	\begin{split}
		|R_2| &\ale \|\tu^1\|_{\BMO}\,\,  \|d\psi^{n+1} \wedge d\tu^2 \wedge \ldots \wedge d\tu^n\|_{\cH^1(\R^n)}\\
		& \ale \|\tu\,\|_{\BMO}\,\,  \|\nabla \psi\|_{L^{(n-\eps)/(1-\eps)}} 						  \,\,  \|\nabla \tu\,\|_{L^{n-\eps}}^{n-1} \\
		& \ale \|\tu\,\|_{\BMO} \int_{B(a,3r)} |\nabla u|^{n-\eps}\, dx\, , 
	\end{split}	
\end{equation}
as
\[
\|\nabla \tu\|_{L^{n-\eps}}^{n-\eps}  + \|\nabla \psi\|_{L^{(n-\eps)/(1-\eps)}}^{(n-\eps)/(1-\eps)}
\ale \int_{B(a,3r)} |\nabla u|^{n-\eps}\, dx 
\]
by standard applications of Poincar\'{e} inequality and \eqref{norm.phi.V}. The constant in \eqref{R2-est} depends only on $n$ and $\|H\|_\infty$.

\subsubsection{A further splitting of $R_1$.} It turns out that $R_1$ is a sum of another harmless term and an error term. Use Hodge decomposition to write
\[
b \, d\tu^1 = d\alpha + \delta\beta
\]
and note that since $H$ is bounded, by Theorem~\ref{th.Hodge1} and the Coifman-Rochberg-Weiss commutator estimate, cf. Theorem~\ref{th:CRW-commutator} (applied to the operator $T$ which maps a vector field to the gradient component of its Hodge decomposition), we have 
\begin{equation}
\label{ab-est}
\|d\alpha\|_{L^{n-\eps}}\ale \|d\tu\,\|_{L^{n-\eps}}, \qquad \|\delta\beta\|_{L^n} \ale \|b\|_{\BMO}\, \|d\tu\,\|_{L^n}\, .	
\end{equation}
By \eqref{def.R1R2}, 
$
R_1=R_{11}+ R_{12}
$,
with
\begin{equation}
\label{def.R11R12}
\begin{split}
R_{11} & = \int   
	 \phi^{n+1} \delta\beta\wedge d\tu^{2}\wedge\ldots \wedge d\tu^{n}\, ,\\
R_{12} & = \int   
	 \phi^{n+1} d\alpha\wedge d\tu^{2}\wedge\ldots \wedge d\tu^{n}\, .\\
\end{split}
\end{equation}
We handle the harmless $R_{12}$ precisely as $R_2$ above, in \eqref{R2-est}, to obtain
\begin{equation}
\label{R12-est}
\begin{split}
	|R_{12}| & \ale \|\tu^n\|_{\BMO}\, 
					\|\nabla \phi\, \|_{L^{(n-\eps)/(1-\eps)}}\,
					\|d\alpha\|_{L^{n-\eps}} \, 
					\|\nabla \tu\, \|_{L^{n-\eps}}^{n-2} \\
			 & \ale \|\tu\, \|_{\BMO} \int_{B(a,3r)} |\nabla u|^{n-\eps}\, dx
			 		\, .
\end{split}
\end{equation}
Gathering \eqref{R2-est} and \eqref{R12-est}, and using Corollary \ref{cor:BMO-est}, we obtain 
\begin{equation}
	\label{RHS.easy}
	|R_2| +|R_{12}| 
	\ale 
		\biggl(\int_{B(a,3r)} |\nabla u|^n\, dx \biggr)^{1/n}\, 
		\int_{B(a,3r)} |\nabla u|^{n-\eps}\, dx\, . 
\end{equation}

\subsubsection{The error term and the use of higher order derivatives} We proceed now to the bad term $R_{11}$. By H\"{o}lder's inequality and \eqref{ab-est}, we have
\begin{equation}
\label{R11.first}
\begin{split}
|R_{11}|  
& \le \|\phi \|_{L^\infty} \|\delta\beta\|_{L^n} \|\nabla \tu\, \|^{n-1}_{L^n}\\
& \ale \|\phi\|_{L^\infty} \, \|b\|_{\BMO}\, \|\nabla \tu\,\|^{n}_{L^n}\, .
\end{split}
\end{equation}
Invoking the estimate \eqref{bound.phi} for $\phi$, Corollary~\ref{cor:BMO-est} for $b$ (combined with the assumption that $H$ is bounded and Lipschitz), we easily check that  \eqref{R11.first} implies 
\begin{equation}
\label{R11.second}
|R_{11}| \le C(\eps) r^\eps \|\nabla u\|_{L^{n-\eps,\eps}(B(a,3r))}^{2-\eps} \|\nabla \tu\,\|^{n}_{L^n}\, .
\end{equation}
In order to be able to combine this estimate with those of $R_2$ and $R_{12}$ given by \eqref{RHS.easy}, and the left hand side estimate \eqref{final.LHS}, we need the following.  

\begin{lemma} 
\label{use-wn22}
For $u\in W^{n/2,2}$ and $\tu=\zeta (u-u_{B(a,3r)})$ as above ($\zeta \equiv 1$ in $B(a,2r)$, $\zeta \equiv 0$ outside $B(a,3r)$, $|\nabla^j \zeta| \ale r^{-j}$ for $j = 1,\ldots,n/2$), we have
\begin{equation}
\label{comparenorms}
\begin{split}
\|\nabla \tu\,\|^{n}_{L^n}  & \ale \|\nabla^{n/2} \tu\, \|_{L^2}^2 \|\tu\, \|_{\BMO}^{n-2}\,  \\  
& \ale \Psi(a,3r)^2 \, \|\nabla u\|_{L^{n-\eps,\eps}(B(a,3r))}^{n-2}\, ,
\end{split}
\end{equation}
where
\begin{equation}
\label{Psi.3r}
\Psi(a,r) := 
    \| \nabla u \|_{L^n(B(a,r))} + \| \nabla^2 u \|_{L^{n/2}(B(a,r))} + \ldots + \| \nabla^{n/2} u \|_{L^2(B(a,r))}\, .
\end{equation}
\end{lemma}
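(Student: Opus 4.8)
The plan is to prove the two inequalities in \eqref{comparenorms} separately. The first one,
\[
\|\nabla \tu\,\|^{n}_{L^n}  \ale \|\nabla^{n/2} \tu\, \|_{L^2}^2 \|\tu\, \|_{\BMO}^{n-2},
\]
is precisely the sharp Gagliardo--Nirenberg interpolation inequality \eqref{GN-intro} applied to the compactly supported function $\tu$; here I would simply cite \cite{Meyer-Riviere-2003}, \cite{Strzelecki-2006}, \cite{Riviere-Strzelecki-2005}, noting that since $\tu \in C_c^\infty$-closure we may work on $\R^n$ directly and all terms are finite because $u \in W^{n/2,2}$ guarantees $\nabla^{n/2}\tu \in L^2$ (via the Leibniz rule, each derivative hitting $\zeta$ costs a factor $r^{-j}$ but is compensated by lower-order derivatives of $u$ on the annulus) and $\tu \in \BMO$ by Corollary~\ref{cor:BMO-est}.

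For the second inequality, the $\BMO$ factor is immediately controlled: by Corollary~\ref{cor:BMO-est} (with $p = n-\eps$, so that $n-p = \eps$), $\|\tu\|_{\BMO} \ale \|\nabla u\|_{L^{n-\eps,\eps}(B(a,3r))}$, which supplies the $(n-2)$-nd power. It remains to check $\|\nabla^{n/2}\tu\|_{L^2}^2 \ale \Psi(a,3r)^2$. Here I would expand $\nabla^{n/2}\tu = \nabla^{n/2}\big(\zeta(u-u_{B(a,3r)})\big)$ by the Leibniz rule into a sum of terms $\nabla^j \zeta \cdot \nabla^{n/2-j}(u - u_{B(a,3r)})$ for $j = 0,1,\ldots,n/2$. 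The $j=0$ term is $\zeta \nabla^{n/2}u$, bounded in $L^2(\R^n)$ by $\|\nabla^{n/2}u\|_{L^2(B(a,3r))} \le \Psi(a,3r)$. For $j \ge 1$, using $|\nabla^j\zeta|\ale r^{-j}$ and that $\nabla\zeta$ is supported in the annulus $A' = B(a,3r)\setminus B(a,2r)$, each such term is bounded in $L^2$ by $r^{-j}\|\nabla^{n/2-j}u\|_{L^2(A')}$ (for $1\le j \le n/2-1$) or, for $j = n/2$, by $r^{-n/2}\|u - u_{B(a,3r)}\|_{L^2(B(a,3r))}$. The latter is handled by Poincaré's inequality, $\|u-u_{B(a,3r)}\|_{L^2(B(a,3r))} \ale r\|\nabla u\|_{L^2(B(a,3r))}$, iterated or combined with Sobolev to land on $\|\nabla^{n/2}u\|_{L^2}$ after absorbing powers of $r$; more efficiently, one iterates Poincaré--Sobolev on the full ball to pass from $\nabla^{n/2-j}u$ at scale $r^{-j}$ down to $\nabla^{n/2}u$, each step trading one derivative for one power of $r^{-1}$, so that every term is dominated by $\Psi(a,3r)$ (note the intermediate norms $\|\nabla^k u\|_{L^{n/k}}$ appearing in $\Psi$ are exactly what interpolation/Sobolev on these annuli produces, up to the scale-invariant powers of $r$ which cancel).

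The main obstacle, and the only place requiring genuine care rather than bookkeeping, is precisely this last point: controlling the intermediate terms $r^{-j}\|\nabla^{n/2-j}u\|_{L^2(A')}$ by the quantity $\Psi(a,3r)$, which contains the scale-invariant norms $\|\nabla^k u\|_{L^{n/k}(B(a,3r))}$ rather than $L^2$ norms. The resolution is that on a ball (or annulus) of radius $\sim r$ in $\R^n$, Hölder's inequality gives $\|\nabla^{n/2-j}u\|_{L^2} \ale r^{\,j}\,\|\nabla^{n/2-j}u\|_{L^{n/(n/2-j)}}$ since the exponents satisfy $\tfrac{1}{2} - \tfrac{n/2-j}{n} = \tfrac{j}{n}$, exactly cancelling the $r^{-j}$ prefactor and yielding a term $\le \|\nabla^{n/2-j}u\|_{L^{n/(n/2-j)}(B(a,3r))} \le \Psi(a,3r)$. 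Thus every Leibniz term is bounded by $\Psi(a,3r)$, summing to $\|\nabla^{n/2}\tu\|_{L^2} \ale \Psi(a,3r)$, and squaring combined with the $\BMO$ bound completes the proof.
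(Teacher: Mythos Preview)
Your proof is correct and follows essentially the same approach as the paper: the first inequality is the sharp Gagliardo--Nirenberg inequality (the paper cites \cite[Theorem~2]{Strzelecki-2006}), the $\BMO$ factor is handled by Corollary~\ref{cor:BMO-est}, and $\|\nabla^{n/2}\tu\|_{L^2}$ is estimated via the Leibniz rule. The only minor difference is that where the paper refers to ``inductive applications of Sobolev's inequality'' for the intermediate Leibniz terms, you instead use H\"older's inequality directly to pass from $L^2$ to $L^{n/(n/2-j)}$ with the powers of $r$ cancelling exactly; both routes are routine and lead to the same conclusion.
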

\begin{proof}
The first line in \eqref{comparenorms} follows directly from \cite[Theorem~2]{Strzelecki-2006}. To check the second inequality, we apply Corollary~\ref{cor:BMO-est} to obtain
\[
\|\tu\, \|_{\BMO}\ale \|\nabla u\|_{L^{n-\eps,\eps}(B(a,3r))}\, .
\]
To estimate $\nabla^{n/2} \tu$, we employ the Leibniz formula for higher order derivatives of the product of two functions, $\zeta$ and $u-u_{B(a,3r)}$, and perform a routine computation involving inductive applications of Sobolev's inequality to verify that
\[
\|\nabla^{n/2} \tu \, \|_{L^2} \ale \Psi(a,3r)\, ,
\]
with $\Psi$ defined by \eqref{Psi.3r}.
\end{proof}

Invoking \eqref{RHS.easy}, \eqref{R11.second} and Lemma~\ref{use-wn22}, we obtain the final estimate of the right hand side,
\begin{equation}
	\label{final.RHS}
\begin{split}
	\biggl|
	\int \psi H(u) J(u)\, dx 
	\biggr|
	& \le C \biggl(\int_{B(a,3r)} |\nabla u|^n\, dx \biggr)^{1/n}\, 
		\int_{B(a,3r)} |\nabla u|^{n-\eps}\, dx\, 
		\\
	& \qquad{} 
		+  C(\eps) r^\eps \, \Psi(a,3r)^2\,   
		   \|\nabla u\|_{L^{n-\eps,\eps}(B(a,3r))}^{n-\eps} \, \\
	& \le C(\eps) r^\eps\, \Phi(a,3r)\, 
	      \|\nabla u\|_{L^{n-\eps,\eps}(B(a,3r))}^{n-\eps} ,
\end{split}
\end{equation}
where, for sake of brevity,
\begin{equation}
\label{small.Phi}
\Phi(a,\rho) = \Psi(a,\rho) + \Psi(a,\rho)^2\, .
\end{equation}

\subsection{Choice of $\epsilon$ and hole filling} 
\label{sec:hole}
\label{sub:3.4}
We now combine \eqref{final.LHS} and \eqref{final.RHS} to obtain
\[
\int_{B(a,r)} |\nabla u|^{n-\eps}\, dx 	\le 
 C(n) \int_{A}|\nabla u|^{n-\varepsilon}\, dx 
 + \frac{1}{5} \int_{B(a,3r)}|\nabla u|^{n-\varepsilon}\, dx
 + C(\eps) r^\eps\, \Phi(a,3r)\, 
 \|\nabla u\|_{L^{n-\eps,\eps}(B(a,3r))}^{n-\eps} \, .
\]
Thus, by hole filling,
\begin{equation}
\label{almost}
\int_{B(a,r)} |\nabla u|^{n-\eps}\, dx 	\le 
 \lambda_0  \int_{B(a,3r)}|\nabla u|^{n-\varepsilon}\, dx
 + C_1(\eps) r^\eps\, \Phi(a,3r)\, \|\nabla u\|_{L^{n-\eps,\eps}(B(a,3r))}^{n-\eps} \, ,	
\end{equation}
with $\lambda_0=\lambda_0(n)= \big(C(n)+\frac 15\big)/ \big(C(n)+1\big) < 1$. For this $\lambda_0$, we fix $\lambda_1,\lambda_2$ so that
\[
\lambda_0 < \lambda_1 < \lambda_2 < 1. 
\]
From \eqref{almost}, we obtain
\begin{equation}
	\label{fixing}
	\frac{1}{r^\eps}\int_{B(a,r)} |\nabla u|^{n-\eps}\, dx
	\le 3^\eps \lambda_0 \frac{1}{(3r)^\eps}\int_{B(a,3r)} 
			|\nabla u|^{n-\eps}\, dx
	+
	C_1(\eps)\, \Phi(a,3r)\, 
	\|\nabla u\|_{L^{n-\eps,\eps}(B(a,3r))}^{n-\eps} \, .
\end{equation}
Now we first fix $\eps=\eps(n)>0$ small enough to have both \eqref{eps.1} and $3^\eps\lambda_0< \lambda_1$. Then, we use absolute continuity of integral to fix $R_0=R_0(\eps)$ so that
\[
C_1(\eps) \Phi(z,R_0) < \lambda_2-\lambda_1 \qquad\mbox{for all $z\in \B^n$.}
\]
Then, whenever $3r<R_0$, we have
\[
\frac{1}{r^\eps}\int_{B(a,r)} |\nabla u|^{n-\eps}\, dx
\le \lambda_2 \|\nabla u\|_{L^{n-\eps,\eps}(B(a,3r))}^{n-\eps}\, .
\]
Noting that if $B(z,\rho)\subset B(a,r)$, then $B(z,3\rho)\subset B(a,3r)$, we write the above inequality with $a,r$ replaced by $z,\rho$ and take the supremum over all $B(z,\rho)\subset B(a,r)$ to obtain
\[
\|\nabla u\|_{L^{n-\eps,\eps}(B(a,r))}^{n-\eps} \le \lambda_2\, \|\nabla u\|_{L^{n-\eps,\eps}(B(a,3r))}^{n-\eps}\, .
\]
Now, a standard iterative argument shows that for $s=-\log(\lambda_2)/\log 3>0$ we have
\[
\|\nabla u\|_{L^{n-\eps,\eps}(B(a,r))}^{n-\eps}
\ale (r/R)^s \|\nabla u\|_{L^{n-\eps,\eps}(B(a,R))}^{n-\eps}, \qquad 0<r<R\le R_0\, .
\]
 An application of Theorem~\ref{th-DGT} yields H\"{o}lder continuity of $u$.

\section{Regularity for $n$-harmonic maps}
\label{ch:n-harmonic}

This Section is devoted to the proof of Theorem \ref{nharmonic-regularity}; we begin with an outline. 

As the solution $u$ maps into a closed submanifold of $\R^d$, in principle one can use $u$ (with a cut-off) as a test function in the $n$-harmonic equation \eqref{eq.n-harmonic}. However, if one only uses the obvious $L^1$ bound of the right-hand side together with $\| u \|_\infty \le C$, this naive approach fails. 

Thus, we need to improve the $L^1$-estimate of the right-hand side by exploting its structure (see Section \ref{ch:n-harm-structure}). Rewriting the second fundamental form term $A(\nabla u, \nabla u)$ as $\Omega \cdot \nabla u$, where $\Omega_{ij} = \sum_l (A_{jl}^i-A_{il}^j) \nabla u^l$, we observe that $\Omega_{ij}$ is \emph{almost} a gradient field. To be precise, if one applies the Hodge decomposition to $\Omega$, the Coifman--Rochberg--Weiss commutator theorem implies that the $L^n$-norm of its divergence-free part is bounded by the $L^n$-norm of $\nabla u$ times the $\BMO$-norm of the coefficients $A_{jl}^i-A_{il}^j$. This last norm is bounded by the $\BMO$-norm of the solution $u$. 

In order to take advantage of this gain, in Section \ref{ch:n-harm-Uhlenbeck} we use Uhlenbeck's decomposition, which enables us to replace $\Omega$ with a matrix of \emph{divergence-free} vector fields $\wOmega$. This new matrix is given as $\wOmega=P^\top \nabla P + P^\top \Omega P$ for a proper choice of $\SO(d)$-valued function $P$, and leads to the~transformed, equivalent form of \eqref{eq.n-harmonic} (see \eqref{eq:transformed-n-harmonic}): 
\[
- \dv (|\nabla u|^{n-2} P^\top \nabla u) 
= |\nabla u|^{n-2} \wOmega \cdot P^\top \nabla u.
\]
The commutator argument described above still applies with $\Omega$ replaced by $\wOmega$, and yields $\| \wOmega \|_{L^n} \ale \| u \|_{\BMO} \| \nabla u \|_{L^n}$ (see Lemma \ref{lem:L1-smallness}). By Poincar\'e's inequality, the additional $\BMO$ term is small on small balls, which results in improved $L^1$-estimates for the right-hand side. When testing with a bounded function, the resulting integral is much smaller than the $W^{1,n}$-energy of $u$. 

\medskip

However, this gain comes at a price: testing the transformed equation with $u$ does not produce the $W^{1,n}$-energy on the left-hand side. To obtain it, one is forced to test e.g., with a~function $\phi$ resulting from the Hodge decomposition of $P^\top \nabla u$. In turn, this leads to a new problem: such a function does not necessarily lie in $L^\infty$, and thus it cannot be used as a test function. 

We choose to work around this issue by applying the Hodge decomposition of $|\nabla u|^{-\eps} P^\top \nabla u$ for a small value of $\eps > 0$, and testing with $\phi$ corresponding to the gradient part. Such $\phi$ is continuous by Morrey's embedding, which makes it a suitable test function. Deciding to work below the natural exponent, we aim (as in Section~3) at a decay estimate for the Morrey norm $\| \nabla u \|_{L^{n-\eps,\eps}}$. This makes the $L^n$-norm of $\nabla u$ appearing naturally on the right-hand side undesirable, and we employ our additional assumption $u \in W^{n/2,2}$ to handle it. 

The final Section \ref{ch:n-harm-finish} involves the choice of $\eps > 0$ and the standard hole-filling trick, and establishes the decay estimate for $\| \nabla u \|_{L^{n-\eps,\eps}}$. 
Since the argument given for $H$-systems applies here verbatim, we actually refer to Section \ref{sec:hole} for details.

\subsection{Structure of the equation}
\label{ch:n-harm-structure}

Let us start by rephrasing the equation in a way consistent with Uhlenbeck's decomposition (Theorem \ref{th:uhlenbeck}). To this end, we extend the second fundamental form $A_p$ to a symmetric bilinear form on the whole $\R^d$ (not just $T_p \cN$), e.g. by taking the tangential part of both arguments, and denote its components $(A_{ij})_p = A_p(e_i,e_j)$ in the standard basis of $\R^d$. Using the tubular neighborhood theorem, we also extend $A_{ij}$ to functions in $C_c^\infty(\R^d)$. In the sequel, $A_{ij}$ (without any subscript) will denote the composition of $A_{ij}$ with $u$. 

Taking advantage of the fact that $(A_{ij})_{u(x)} \perp \pl_\alpha u(x)$ for each $\alpha = 1,\ldots,n$, $i,j=1,\ldots,d$ and $x \in \B^n$ -- as $\pl_\alpha u(x)$ is tangent to $\cN$ at $u(x)$ and $(A_{ij})_{u(x)}$ is orthogonal -- we can rewrite the right hand side as follows: 
\begin{align*}
    A_u^i(\nabla u, \nabla u)    
    & = \sum_\alpha A^i(\pl_\alpha u, \pl_\alpha u) 
    = \sum_{\alpha,j,l} A^i_{jl} \pl_\alpha u^j \pl_\alpha u^l \\
    & = \sum_{\alpha,j,l} (A^i_{jl} - A^j_{il}) \pl_\alpha u^j \pl_\alpha u^l 
    = \sum_{j} \Omega_{ij} \cdot \nabla u^j, \\
\end{align*}
if we take $\Omega_{ij} := \sum_l (A^i_{jl} - A^j_{il}) \nabla u^l$. In this notation, $\Omega$ is a $d \times d$ matrix, entries of which are vector fields on $\B^n$. Notice that $\Omega$ is skew-symmetric and that $|\Omega| \ale |\nabla u|$ pointwise, with a~constant depending on $\cN$. In short, the equation for $u$ takes the form
\begin{equation}
    \label{Omega-eq}
    - \dv (|\nabla u|^{n-2} \nabla u) 
= |\nabla u|^{n-2} \Omega \cdot \nabla u.
\end{equation}
For reasons that should become clear later, it is advantageous to use a cut off function already at this stage. We alter the definition of $\Omega$ slightly, as follows. Fix a cutoff function $\zeta \in C_c^\infty(B(a,3r))$ satisfying $\zeta \equiv 1$ on $B(a,2r)$, and
\[
| \nabla^j \zeta(x)| \ale r^{-j}, \qquad j=1,2,\ldots, \frac n2\, , \quad x\in B(a,3r). 
\]
We take 
\[
\tu(x) := \zeta(x) (u(x) - u_{B(a,3r)}) + u_{B(a,3r)};
\]
note the added constant which ensures that $\tu$ and $u$ coincide in $B(a,2r)$ (not only their derivatives). Now we let 
\[
\Omega_{ij} 
:= \sum_l (A^i_{jl}(\tu) - A^j_{il}(\tu)) \nabla \tu^l.
\]
and observe that the equation \eqref{Omega-eq} still holds on $B(a,2r)$. On the other hand, now $\Omega$ vanishes outside of $B(a,3r)$, which will be useful later on. 

\medskip 

For future use, let us note the size estimates on $\Omega$. We observe that $|\Omega| \ale |\nabla \tu|$ pointwise, which implies by an application of Lemma \ref{lem:BMO-est-NEW} that 
\begin{align*}
    \| \Omega \|_{L^n(\R^n)} & \ale \| \nabla \tu \|_{L^n(\R^n)} 
    \ale \| \nabla u \|_{L^n(B(a,3r))}, \\
    \| \Omega \|_{L^{n-\eps,\eps}(\R^n)} & \ale \| \nabla \tu \|_{L^{n-\eps,\eps}(\R^n)} 
    \ale \| \nabla u \|_{L^{n-\eps,\eps}(B(a,3r))}.
\end{align*}

\subsection{Application of Uhlenbeck's decomposition}
\label{ch:n-harm-Uhlenbeck}

We may assume that the smallness assumption $\| \Omega \|_{L^n(\B^n)} < \eps_0$ from Theorem \ref{th:uhlenbeck} is satisfied; in consequence, it also holds on any smaller ball. If this were not true, we could still cover $\B^n$ by smaller balls on which the smallness condition holds, and work on them instead.

Consider the decomposition given by Theorem \ref{th:uhlenbeck} on the ball $B(a,3r)$ for the exponent $p = \frac{n-\eps}{2}$. Since the norms involved are scale-invariant, we infer that 
\[
\wOmega := P^\top \nabla P + P^\top \Omega P
\]
is a matrix of divergence-free vector fields, for some $\SO(d)$-valued function $P \in W^{1,n}$ satisfying $P = \Id$ on $\pl B(a,3r)$. We may consider $P$ as extended beyond this ball by $P \equiv \Id$, which results in $\wOmega = \Omega = 0$ outside $B(a,3r)$. 

Moreover, estimates in $L^n$ and $L^{p,n-p}$ are available for $P$: 
\begin{align*}
\| \nabla P \|_{L^n(B(a,3r))}
& \le C(n,d) \| \Omega \|_{L^n(B(a,3r))}, \\
\| \nabla P \|_{L^{p,n-p}(B(a,3r))} 
& \le C(n,d) \| \Omega \|_{L^{n-\eps,\eps}(B(a,3r))}. 
\end{align*}
Due to the zero extension, we can replace $B(a,3r)$ above with $\R^n$; together with the previous estimates on $\Omega$, these give us 
\begin{equation}
\label{eq:P-est}
\begin{split}
\| \nabla P \|_{L^n(\R^n)}
& \ale \| \nabla \tu \|_{L^n(\R^n)}, \\
\| \nabla P \|_{L^{p,n-p}(\R^n)} 
& \ale \| \nabla u \|_{L^{n-\eps,\eps}(B(a,3r))}. 
\end{split}    
\end{equation}
Note that the norm $\| P \|_\BMO$ is controlled by $\| \nabla P \|_{L^{p,n-p}(\R^n)}$ above.

\medskip

On $B(a,2r)$, a direct computation yields the following transformed equation
\[
- \dv (|\nabla u|^{n-2} P^\top \nabla u) 
= |\nabla u|^{n-2} \left( P^\top \Omega - \nabla P^\top \right) \cdot \nabla u.
\]
Taking into account $P \, P^\top = \Id$, we have $\nabla P^\top = - P^\top \, \nabla P \, P^\top$, and thus 
\begin{equation}
\label{eq:transformed-n-harmonic}
- \dv (|\nabla u|^{n-2} P^\top \nabla u) 
= |\nabla u|^{n-2} \wOmega \cdot P^\top \nabla u
\quad \text{in } B(a,2r).
\end{equation}
This form of the equation is bit more involved than the original, but the crucial piece of structure here is that $\dv \wOmega = 0$. 

\subsection{The test map}

Similarly to the case of $H$-systems, we will define the test map by applying the Hodge decomposition to $|\nabla \tilde{u}|^{-\eps} P^\top \nabla \tu$, where $\tu$ is the cut-off solution from the previous subsection and $\eps > 0$ is small. 

Recall that $\tu = \zeta (u - u_{B(a,3r)}) + u_{B(a,3r)}$, where $\zeta \in C_c^\infty(B(a,3r))$ is a cut-off function satisfying $\zeta \equiv 1$ on $B(a,2r)$. With small $\eps > 0$ to be chosen later, we define 
\[
G^k := |\nabla \tu|^{-\eps} (P^\top \nabla \tu)^k, 
\quad k=1,\ldots,d.
\]
The function $G$ is in $L^q(\R^n)$ for all $1 \le q \le \frac{n}{1-\eps}$. However, we will use the value $q := \frac{n-\eps}{1-\eps}$, which lies strictly between $n$ and $\frac{n}{1-\eps}$. We have 
\[
\| G \|_{L^q(\R^n)}^q 
\le \int_{\R^n} |\nabla \tu|^{n-\eps} \dd x
\ale \int_{B(a,3r)} |\nabla u|^{n-\eps} \dd x
\]
by Lemma \ref{lem:BMO-est-NEW}. Then by Hodge decomposition (Theorem \ref{th.Hodge1}), we have 
\[
G^k = \nabla \phi^k + V^k 
\quad \text{for each } k,
\]
where $\phi^k \in W_{\text{loc}}^{1,q}(\R^n)$, $V^k \in L^q(\R^n,\R^n)$ is divergence-free and
\[
\|\nabla\phi^{k}\|_{L^{q}(\R^{n})}+\|V^{k}\|_{L^{q}(\R^{n})} 
\ale \|G^{k}\|_{L^{q}(\R^n)} 
\ale \left( \int_{B(a,3r)}|\nabla u|^{n-\eps} \dd x \right)^{(1-\eps)/(n-\eps)}.
\]

Moreover, without loss of generality we can have 
\[
\dashint_{B(a,3r)} \phi^k = 0.
\]
It will be crucial that $q = \frac{n-\eps}{1-\eps}$ is larger than $n$, since then an application of Morrey's inequality gives us 
\[
\| \phi \|_{L^\infty(B(a,3r))} 
\ale C(\eps) r^{\eps} \left( r^{-\eps} \int_{B(a,3r)} |\nabla u|^{n-\eps} \right)^{(1-\eps)/(n-\eps)}. 
\]
Introducing another cut-off function $\zeta_0 \in C_c^\infty(B(a,2r))$ satisfying $\zeta_0 \equiv 1$ on $B(a,r)$, we can finally choose $\psi(x) := \zeta_0(x) \phi(x)$ as the test function. Note that it is supported in the ball $B(a,2r)$ in which the transformed equation \eqref{eq:transformed-n-harmonic} is satisfied. Since it is also bounded, it can be used as a test function. 

\subsection{Left hand side estimates}

Let us test the transformed equation \eqref{eq:transformed-n-harmonic} with $\psi = \zeta_0 \phi$. We shall check here that the left hand side integral is bounded from below by $\int_{B(a,r)} |\nabla u|^{n-\eps}$, up to some less important terms.  

The main issue is to show that the divergence-free error term $V$ is in fact small. For this, first note that in the case $\eps=0$ we have Hodge decomposition (Theorem \ref{th.Hodge1})
\[
P^\top \nabla \tu = \nabla w + W,
\]
where $W$ is small by the commutator estimate (Theorem \ref{th:CRW-commutator})
\[
\| W \|_{L^{n-\eps}} \ale \| P^\top \|_\BMO \| \nabla \tu \|_{L^{n-\eps}}.
\]
Thanks to \eqref{eq:P-est}, both norms are bounded in terms of the $(n-\eps)$-energy of $u$: 
\begin{gather*}
\| P^\top \|_{\BMO} 
\ale \| \nabla P^\top \|_{L^{p,n-p}} 
\ale \| \nabla u \|_{L^{n-\eps,\eps}(B(a,3r))} \\
\| \nabla \tilde{u} \|_{L^{n-\eps}} 
\ale \| \nabla u \|_{L^{n-\eps}(B(a,3r))}.
\end{gather*}
In consequence, $W$ is also small when compared to $\nabla u$: 
\[
\| W \|_{L^{n-\eps}} \ale \| \nabla u \|_{L^{n-\eps,\eps}(B(a,3r))} \| \nabla u \|_{L^{n-\eps}(B(a,3r))}.
\]
As in \cite{Iwaniec-Sbordone-1994}, let us denote by $T$ the Calder\'{o}n--Zygmund operator which produces the divergence-free part of the Hodge decomposition, so that 
\[
T(P^\top \nabla \tu) = W, \quad T(|\nabla \tu|^{-\eps} P^\top \nabla \tu) = V.
\]
Following Theorem \ref{thm:is-stability}, we also introduce the nonlinear operator 
\[
S^\eps \colon L^{n-\eps} \to L^{\frac{n-\eps}{1-\eps}}, \qquad 
S^\eps(f) := \left( \frac{|f|}{\| f \|_{n-\eps}} \right)^{-\eps} f.
\]
Note that $\| S^\eps(f) \|_{L^{\frac{n-\eps}{1-\eps}}} = \| f \|_{L^{n-\eps}}$ thanks to the normalization. The main ingredient in bounding $V$ is provided by the following stability estimate, cf. \cite[Th.~4]{Iwaniec-Sbordone-1994} and Theorem~\ref{thm:is-stability} in Section 2,
\[
\| T S^\eps(f) - S^\eps(Tf) \|_{L^{\frac{n-\eps}{1-\eps}}} \le C \eps \cdot \| f \|_{L^{n-\eps}} 
\quad \text{for } f \in L^{n-\eps}.
\]
In our case, as $P^\top$ is orthogonal, we have 
\[
S^\eps(P^\top \nabla \tu) = \| \nabla \tu \|_{L^{n-\eps}}^\eps \cdot |\nabla \tu|^{-\eps} P^\top \nabla \tu, \qquad 
T S^\eps(P^\top \nabla \tu) = \| \nabla \tu \|_{L^{n-\eps}}^\eps \cdot V.
\]
On the other hand, $T(P^\top \nabla \tilde{u}) = W$ and in consequence 
\[
\| S^\eps (T(P^\top \nabla \tu)) \|_{L^{\frac{n-\eps}{1-\eps}}} 
= \| T(P^\top \nabla \tu) \|_{L^{n-\eps}} = \| W \|_{L^{n-\eps}}.
\]
By triangle inequality, we have now 
\begin{align*}
\| \nabla \tu \|_{n-\eps}^\eps \cdot \| V \|_{L^{\frac{n-\eps}{1-\eps}}} 
& = \| T S^\eps(P^\top \nabla \tu)\|_{L^{\frac{n-\eps}{1-\eps}}} \\
& \le \| S^\eps(T(P^\top \nabla \tu)) \|_{L^{\frac{n-\eps}{1-\eps}}} 
+ \| T S^\eps(P^\top \nabla \tu) - S^\eps(T(P^\top \nabla \tu)) \|_{L^{\frac{n-\eps}{1-\eps}}} \\
& \le \| W \|_{L^{n-\eps}} + C \eps \cdot \| P^\top \nabla \tu \|_{L^{n-\eps}}\, .
\end{align*}
Given our estimates for $W$, we thus have
\begin{equation}
\label{V-small}
\| V \|_{L^{\frac{n-\eps}{1-\eps}}} 
\ale \left( \eps + \| \nabla u \|_{L^{n-\eps,\eps}(B(a,3r))} \right) 
\cdot \| \nabla u \|_{L^{n-\eps}(B(a,3r))}^{1-\eps}.
\end{equation}

\bigskip

We are now ready to investigate the left hand side. Thanks to the decomposition $\nabla \phi = |\nabla u|^{-\eps} P^\top \nabla u - V$, it consists of three terms:
\begin{align*}
- \int \dv(|\nabla u|^{n-2} P^\top \nabla u) \cdot (\zeta_0 \phi) 
& = \int |\nabla u|^{n-2} \ps{ P^\top \nabla u }{\nabla (\zeta_0 \phi)} \\
& = \int \zeta_0 |\nabla u|^{n-2} \ps{ P^\top \nabla u }{|P^\top \nabla u|^{-\eps} P^\top \nabla u} 
- \int \zeta_0 |\nabla u|^{n-2} \ps{ P^\top \nabla u }{V} \\
& \phantom{=} + \int |\nabla u|^{n-2} \ps{ P^\top \nabla u }{\phi \nabla \zeta_0} \\
& = \barroman{I} + \barroman{II} + \barroman{III}.
\end{align*}
Since $\tu = u$ on the support of $\zeta_0$, the first term is simply 
\[
\barroman{I} 
= \int \zeta_0 |\nabla u|^{n-2} |P^\top \nabla u|^{2-\eps}
= \int \zeta_0 |\nabla u|^{n-\eps} 
\ge \int_{B(a,r)} |\nabla u|^{n-\eps}.
\]

For the second term, we use the estimate on $V$ we derived before: 
\begin{align*}
|\barroman{II}|
& \le \int_{B(a,3r)} |\nabla u|^{n-1} |V| \\
& \le \| \nabla u \|_{L^{n-\eps}(B(a,3r))}^{n-1} 
\cdot \| V \|_{L^{\frac{n-\eps}{1-\eps}}(B(a,3r))} \\
& \ale \| \nabla u \|_{L^{n-\eps}(B(a,3r))}^{n-1} 
\cdot \left( \eps + \| \nabla u \|_{L^{n-\eps,\eps}(B(a,3r))} \right) 
\cdot \| \nabla u \|_{L^{n-\eps}(B(a,3r))}^{1-\eps} \\
& = \left( \eps + \| \nabla u \|_{L^{n-\eps,\eps}(B(a,3r))} \right) 
\cdot \int_{B(a,3r)} |\nabla u|^{n-\eps}.
\end{align*}
The implicit constant $C$ above is independent of $\eps$, hence from now on we assume that $\eps$ is chosen small enough so that $C \eps \le \frac{1}{20}$. Moreover, we may assume that the ball $B(a,3r)$ is chosen sufficiently small, and hence $\| \nabla u \|_{L^n(B(a,3r))}$ is also small. In consequence, by H\"older's inequality we may have $C \| \nabla u \|_{L^{n-\eps,\eps}(B(a,3r))}$ smaller than another $\frac{1}{20}$. Finally, this means that 
\[
|\barroman{II}|
\le \frac{1}{10} \int_{B(a,3r)} |\nabla u|^{n-\eps}.
\]

The cut-off term $\barroman{III}$ is treated in a standard way, by applying Poincar\'e's inequality to $\phi$. Note that $\nabla \zeta_0$ is non-zero only on the annulus $A := B(a,2r) \setminus B(a,r)$, and hence 
\begin{align*}
|\barroman{III}|
& \ale r^{-1} \int_A |\nabla u|^{n-1} |\phi| \\
& \le r^{-1} \| \nabla u \|_{L^{n-\eps}(A)}^{n-1} \| \phi \|_{L^{\frac{n-\eps}{1-\eps}}(B(a,3r))} \\
& \ale \| \nabla u \|_{L^{n-\eps}(A)}^{n-1} \| 
\nabla \phi \|_{L^{\frac{n-\eps}{1-\eps}}(B(a,3r))} \\
& \ale \| \nabla u \|_{L^{n-\eps}(A)}^{n-1} \| \nabla u \|_{L^{n-\eps}(B(a,3r))}^{1-\eps}
\end{align*}
Finally, by Young's inequality, 
\[
|\barroman{III}|
\le \frac{1}{10} \int_{B(a,3r)} |\nabla u|^{n-\eps} + C(n) \int_A |\nabla u|^{n-\eps}.
\]

\medskip

Summing up the contributions of $\barroman{I},\barroman{II},\barroman{III}$, we finally obtain the left-hand side estimate 
\begin{align}
\label{eq:n-harm-LHS}
\int |\nabla u|^{n-2} \ps{ P^\top \nabla u }{\nabla (\zeta_0 \phi)}
& \ge \int_{B(a,r)} |\nabla u|^{n-\eps}\, dx \\
\nonumber & \quad{} - C(n) \int_{A}|\nabla u|^{n-\varepsilon}\, dx - \frac{1}{5} \int_{B(a,3r)}|\nabla u|^{n-\varepsilon}\, dx\, .
\end{align}

\subsection{Right hand side estimates}

The crucial insight we will employ is that the right hand side of \eqref{eq:transformed-n-harmonic} is small in $L^1$. Note the naive estimate 
\[
\| \wOmega \|_{L^n(\R^n)} 
\ale \| \Omega \|_{L^n(\R^n)} 
\ale \| \nabla u \|_{L^n(B(a,3r))},
\]
which leads to bounding the right hand side $|\nabla \tu|^{n-2} \wOmega \cdot P^\top \nabla \tu$ in $L^1$ by $\| \nabla u \|_{L^n(B(a,3r))}^n$. However, Uhlenbeck's decomposition allows an essential improvement here: 

\begin{lemma}
\label{lem:L1-smallness}
The $L^n$ norm of the matrix $\wOmega$ is bounded by 
\[
\| \wOmega \|_{L^n(\R^n)} \ale 
\| \nabla u \|_{L^{n-\eps,\eps}(B(a,3r))}
\cdot \| \nabla \tu \|_{L^n(\R^n)}
\]
\end{lemma}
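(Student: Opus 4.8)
The plan is to exploit the divergence-free structure of $\wOmega$ together with the Coifman--Rochberg--Weiss commutator estimate, exactly as anticipated in the introduction. Let $T$ denote the Calder\'on--Zygmund operator assigning to a vector field the divergence-free part of its Hodge decomposition (the same $T$ as in the left hand side estimates above); equivalently $T = \Id - \nabla \Delta^{-1} \dv$. Two facts will be used repeatedly: $T$ acts as the identity on divergence-free vector fields, and $T(\nabla f) = 0$ for every scalar $f$ with $\nabla f \in L^n(\R^n)$, since $\nabla f$ is then a pure gradient. Since $\dv \wOmega = 0$ componentwise, we have $\wOmega = T\wOmega$, and by linearity
\[
\wOmega = T(P^\top \nabla P) + T(P^\top \Omega P).
\]
The strategy is to rewrite each summand as a sum of terms of the form $b\,\nabla f$ with $b$ a bounded $\BMO$ function and $f$ scalar; then $T(b\,\nabla f) = b\,T(\nabla f) - [b,T](\nabla f) = -[b,T](\nabla f)$, and Theorem~\ref{th:CRW-commutator} applies.

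For the first summand, $(P^\top \nabla P)_{ij} = \sum_k P_{ki}\, \nabla P_{kj}$, and since $P$ equals $\Id$ outside $B(a,3r)$ we get $T(\nabla P_{kj}) = 0$, hence by Theorem~\ref{th:CRW-commutator}
\[
\| T(P_{ki}\,\nabla P_{kj}) \|_{L^n} = \| [P_{ki},T](\nabla P_{kj}) \|_{L^n} \ale \| P_{ki} \|_{\BMO}\, \| \nabla P_{kj} \|_{L^n}.
\]
For the second summand, recall $\Omega_{kl} = \sum_m \big(A^k_{lm}(\tu) - A^l_{km}(\tu)\big)\,\nabla \tu^m$, so that $(P^\top \Omega P)_{ij} = \sum_m b^{ij}_m\, \nabla \tu^m$ with
\[
b^{ij}_m := \sum_{k,l} P_{ki}\,\big( A^k_{lm}(\tu) - A^l_{km}(\tu) \big)\, P_{lj};
\]
since $\tu^m$ is compactly supported, $T(\nabla \tu^m) = 0$, and the same argument gives $\| T(P^\top \Omega P)_{ij} \|_{L^n} \ale \sum_m \| b^{ij}_m \|_{\BMO}\, \| \nabla \tu^m \|_{L^n}$.

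It remains to estimate the coefficient norms. For $\|P_{ki}\|_\BMO$: as $P$ equals $\Id$ off $B(a,3r)$, the Morrey-space form of the embedding $W^{1,p} \hookrightarrow \BMO$ with $p = \tfrac{n-\eps}{2}$ gives $\|P\|_\BMO \ale \|\nabla P\|_{L^{p,n-p}(\R^n)} \ale \|\nabla u\|_{L^{n-\eps,\eps}(B(a,3r))}$ by \eqref{eq:P-est} (this is the estimate already noted right after \eqref{eq:P-est}). For $b^{ij}_m$: the entries of $P$ and the functions $A^k_{lm}(\tu)$ are bounded by constants depending only on $\cN$; combining the product bound $\| fg \|_\BMO \ale \| f \|_\infty \| g \|_\BMO + \| g \|_\infty \| f \|_\BMO$ with the Lipschitz estimate $\| A^k_{lm}(\tu) \|_\BMO \ale \| \tu \|_\BMO$ (valid since $A_{ij} \in C_c^\infty(\R^d)$) and Corollary~\ref{cor:BMO-est}, we obtain $\| b^{ij}_m \|_\BMO \ale \| \tu \|_\BMO + \| P \|_\BMO \ale \| \nabla u \|_{L^{n-\eps,\eps}(B(a,3r))}$. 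Finally, $\| \nabla P_{kj} \|_{L^n} \le \| \nabla P \|_{L^n(\R^n)} \ale \| \nabla \tu \|_{L^n(\R^n)}$ by \eqref{eq:P-est}, while $\| \nabla \tu^m \|_{L^n} \le \| \nabla \tu \|_{L^n(\R^n)}$ trivially. Summing over all indices yields the claim. The main obstacle here is not any individual estimate but the structural observation at the outset --- that $\wOmega = T\wOmega$ and that both $P^\top \nabla P$ and $P^\top \Omega P$ are pure gradients up to bounded $\BMO$ coefficients, so that after applying $T$ only commutators survive --- once this is in place, the coefficient bounds are routine uses of the $\BMO$ machinery from Section~\ref{ch:n-harmonic}'s preliminaries.
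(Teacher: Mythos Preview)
Your proof is correct and follows essentially the same route as the paper: write $\wOmega = T\wOmega$, split into $T(P^\top\nabla P)$ and $T(P^\top\Omega P)$, observe that in each case a pure gradient sits inside so that only a commutator $[b,T]$ survives, and then invoke Theorem~\ref{th:CRW-commutator}. The only cosmetic difference is in how you bound the $\BMO$ norm of the coefficient $b^{ij}_m$: you use the product rule $\|fg\|_\BMO \ale \|f\|_\infty\|g\|_\BMO + \|g\|_\infty\|f\|_\BMO$ together with the Lipschitz composition estimate $\|A(\tu)\|_\BMO \ale \|\tu\|_\BMO$, whereas the paper passes through Poincar\'e's inequality, bounding $\|b^{ij}_m\|_\BMO$ by $\|\nabla b^{ij}_m\|_{L^{p,n-p}}$ and then applying Leibniz's rule to the gradient of the product; both routes land on the same Morrey-norm bound.
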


\begin{proof}
Let $T \colon L^n(\R^n) \to L^n(\R^n)$ be the operator responsible for the divergence-free part of Hodge decomposition. That way, $\wOmega = T \wOmega$. Recalling that $\wOmega = P^\top \Omega P + P^\top \nabla P$, we have 
\[
\| \wOmega \|_{L^n} 
= \| T \wOmega \|_{L^n} 
\le \| T (P^\top \Omega P) \|_{L^n} + \| T(P^\top \nabla P) \|_{L^n}.
\]

\medskip

Let us first look at the second term. Recall that $P$ was extended as a constant function outside of $B$, so $T(\nabla P) = 0$. By the commutator estimate (Theorem \ref{th:CRW-commutator}), 
\[
\| T(P^\top \nabla P) \|_{L^n}
\ale \| P^\top \|_{\BMO} \| \nabla P \|_{L^n}.
\]
These norms are bounded using \eqref{eq:P-est} and Poincar\'e's inequality:
\begin{align*}
\| \nabla P \|_{L^n}
& \ale \| \nabla \tu \|_{L^n(\R^n)}, \\
\| P^\top \|_{\BMO} 
& \ale \| \nabla P^\top \|_{L^{p,n-p}} 
\ale \| \nabla u \|_{L^{n-\eps,\eps}(B(a,3r))}, 
\end{align*}
which leads to the desired estimate. 

\medskip

The term $\| T (P^\top \Omega P) \|_{L^n}$ behaves similarly, once we unfold the definition of $\Omega$: 
\[
(P^\top \Omega P)_{ij} 
= \sum_{s,t} P^\top_{is} \Omega_{st} P_{tj} 
= \sum_{s,t,l} P^\top_{is} P_{tj} (A^s_{tl}(\tu) - A^t_{sl}(\tu)) \nabla \tu^l.
\]
Once again, there is a gradient field inside, which enables us to use the commutator estimate: 
\[
\| T (P^\top \Omega P) \|_{L^n}
\ale \sum_{s,t,l} \| P^\top_{is} P_{tj} (A^s_{tl}(\tu) - A^t_{sl}(\tu)) \|_{\BMO} \| \nabla \tu^l \|_{L^n}.
\]
For the $\BMO$ estimate, we apply Poincar\'e's inequality once again. Since $P^\top$, $P$ and $A$ are all uniformly bounded, by Leibniz's rule we have 
\begin{align*}
\| P^\top_{is} P_{tj} (A^s_{tl}(\tu) - A^t_{sl}(\tu)) \|_{\BMO}
& \ale \| \nabla (P^\top_{is} P_{tj} (A^s_{tl}(\tu) - A^t_{sl}(\tu))) \|_{L^{p,n-p}} \\
& \ale \| \nabla P^\top_{is} \|_{L^{p,n-p}} + \| \nabla P_{tj} \|_{L^{p,n-p}} + \| \nabla (A^s_{tl}(\tu) - A^t_{sl}(\tu)) \|_{L^{p,n-p}}.
\end{align*}
The estimates for $P^\top$ and $P$ follow as before. The estimate for $A$ is even easier, as $|\nabla (A(\tu))| \ale |\nabla \tu|$ pointwise and so Lemma \ref{lem:BMO-est-NEW} implies the desired bound in terms of $\| \nabla u \|_{L^{p,n-p}(B(a,3r))}$ and hence also $\| \nabla u \|_{L^{n-\eps,\eps}(B(a,3r))}$. 
\end{proof}

\medskip

With Lemma \ref{lem:L1-smallness}, the estimate is straightforward. First, we note the $L^1$ estimate for the right hand side of the equation: 
\[
\| |\nabla u|^{n-2} \wOmega \cdot P^\top \nabla u \|_{L^1(B(a,2r))} 
\le \| \nabla \tu \|_{L^n}^{n-1} \cdot \| \wOmega \|_{L^n}
\ale \| \nabla \tu \|_{L^n}^n \cdot 
\| \nabla u \|_{L^{n-\eps,\eps}(B(a,3r))},
\]
which is an improvement when compared with a naive bound $\| \nabla u \|_{L^n(B(a,3r))}^{n}$. Then we combine it with the $L^\infty$ estimate for the test function $\psi = \zeta_0 \phi$: 
\[
\| \psi \|_{L^\infty(B(a,2r))} 
\le \| \phi \|_{L^\infty(B(a,2r))} 
\ale C(\eps) r^{\eps} \| \nabla u \|_{L^{n-\eps,\eps}(B(a,3r))}^{1-\eps},
\]
finally obtaining 
\[
\left| \int_{B(a,2r)} |\nabla u|^{n-2} \wOmega \cdot P^\top \nabla u \cdot \psi \right|
\ale 
C(\eps) r^{\eps} \cdot \| \nabla \tu \|_{L^n}^n \cdot 
\| \nabla u \|_{L^{n-\eps,\eps}(B(a,3r))}^{2-\eps}.
\]

\medskip

The rest of the reasoning is strictly analogous to the estimate for $H$-systems. Since we aim at a decay estimate for the Morrey norm $\| \nabla u \|_{L^{n-\eps,\eps}}$, the $L^n$ norm is undesirable. We bound it via Lemma \ref{use-wn22} using the $W^{n/2,2}$ assumption: 
\[
\| \nabla \tu \|_{L^n}^n 
\ale \Psi(a,3r)^2 \, \|\nabla u\|_{L^{n-\eps,\eps}(B(a,3r))}^{n-2},
\]
where 
\[
\Psi(a,r) := 
    \| \nabla u \|_{L^n(B(a,r))} + \| \nabla^2 u \|_{L^{n/2}(B(a,r))} + \ldots + \| \nabla^{n/2} u \|_{L^2(B(a,r))}
\]
can be assumed arbitrarily small for small balls. With this estimate, the right-hand side is bounded as follows: 
\begin{equation}
\label{eq:n-harm-RHS}
\left| \int_{B(a,2r)} |\nabla u|^{n-2} \wOmega \cdot P^\top \nabla u \cdot \psi \right|
\ale 
C(\eps) r^{\eps} \cdot \Psi(a,3r)^2 \cdot 
\| \nabla u \|_{L^{n-\eps,\eps}(B(a,3r))}^{n-\eps}.
\end{equation}

\subsection{Choice of $\epsilon$ and hole filling} 
\label{ch:n-harm-finish}

Combining the estimates for both sides of the equation \eqref{eq:n-harm-LHS} with \eqref{eq:n-harm-RHS}, we obtain 
\[
\int_{B(a,r)} |\nabla u|^{n-\eps}\, dx 	\le 
 C(n) \int_{A}|\nabla u|^{n-\eps}\, dx 
 + \frac{1}{5} \int_{B(a,3r)}|\nabla u|^{n-\eps}\, dx
 + C(\eps) r^\eps\, \Psi(a,3r)^2\, 
 \|\nabla u\|_{L^{n-\eps,\eps}(B(a,3r))}^{n-\eps} \, .
\]
The reasoning presented in the case of $H$-systems (Section \ref{sec:hole}) applies here without changes. Assuming we work on a small ball, iterating this estimate leads to H\"older continuity by an application of the Dirichlet growth theorem.

\bigskip

\bibliography{wn2}{}
\bibliographystyle{amsplain}

\vspace*{.5cm}

\end{document}